\theoremstyle{plain}
\newtheorem{theorem}{Theorem}[section]
\newtheorem{lemma}[theorem]{Lemma}
\newtheorem{proposition}[theorem]{Proposition}
\newtheorem{corollary}[theorem]{Corollary}
\theoremstyle{remark}
\newtheorem{remark}{Remark}[section]\theoremstyle{definition}
\newtheorem{definition}{Definition}[section]
\begin{document}

\frenchspacing

\title{Functional approach to the normality of mappings}

\author{M.\,\,Yu.~Liseev}

\maketitle

\begin{abstract}
	In the article a technique of the usage of $f$-continuous functions (on mappings) and their families is developed. 
A  proof of the Urysohn's Lemma for mappings is presented and a variant of the Brouwer-Tietze-Urysohn Extension Theorem for mappings is proven.  Characterizations of the normality properties of mappings are given and the notion of a perfect normality of a mapping is introduced. It seems to be the most optimal in this approach.

Bibliography: 6 names.

\textbf{Keywords:} fiberwise general topology, $f$-continuous mapping, ($\sigma$-)normal mapping, perfectly normal mapping, Urysohn's Lemma, Brouwer-Tietze-Urysohn's Theorem, Vedenisov's conditions of perfect normality.
\end{abstract}

\footnotetext{«The paper was published with the financial support of the Ministry of Education and Science of the Russian Federation as part of the program of the Moscow Center for Fundamental and Applied Mathematics under the agreement №075-15-2019-1621.}

\section{Introduction and preliminary information}
The fiberwise general topology (also called the topology of continuous maps) is developing on the basis of general and algebraic topologies. The idea of extending topological properties of spaces to mappings ("from spaces --- to mappings") was formulated by B.\,A.~Pasinkov in~\cite{PasOtobrFunct} and inspired numerous studies.
The article provides a functional approach to the solution of the problem  of extending normality properties to mappings which was proposed to the author by B.\,A.~Pasynkov. 
In particular, the extension to mappings of the Brouwer-Tietze-Urysohn Theorem about the extension
of functions from closed subsets of normal spaces onto the whole space is considered. The concept of a perfectly normal mapping is introduced, for which the  analogue of Vedenisov's condition for perfectly normal spaces is fulfilled.

Definition of a normal mapping~\cite{MusPas} has led to the natural definition of a $co$-perfectly normal mapping (a normal mapping, every open submapping of which is a $F_{\sigma}$-submapping).
In~\cite{liseev3}, examples of $co$-perfectly normal mappings that are not hereditarily normal are given, the question of inheriting normality of a mapping by $F_{\sigma}$-submappings remains open.
The "enhanced" property of the mapping normality --- $\sigma$--normality was also introduced in~\cite{liseev3}. Every $\sigma$--normal mapping is normal, and for a constant $T_{1}$--mapping its normality, $\sigma$--normality and normality of a total space are equivalent properties~\cite{liseev3}.
As it turned out, the  mapping $\sigma$--normality is inherited by its $F_{\sigma}$-submappings.
Thus, this approach made it possible to introduce in~\cite{liseev3} the concept of a $co$-$\sigma$--perfect normality of a mapping (i.e. a $\sigma$--normal mapping, every open subset of which has the type $F_{\sigma}$) and prove that, defined in this way, the perfect normality of a mapping implies its hereditary normality, and is a hereditary property.

The concept of a $f$--continuous function on a mapping $f:X \to Y$ was first introduced by A.\,Yu.~Zubov~\cite[Theorem 3.1]{Zubov}. Using this concept, he proved an analogue of Uryson's Lemma (formulated in~\cite[Theorem 3.1]{Zubov}) for mappings. It seems that no further progress in the functional approach to study mappings (using $f$-continuous functions) occurred.

The designations and the main notions used in the paper are given in~\S\ 1. In~\S\ 2 
a technique of the usage of $f$--continuous functions is developed, and explicit methods for constructing such functions are given (Lemmas~\ref{lemFContFunc}, \ref{lemSumFcontMapFcont} and Proposition~\ref{prFContMapOnCorPart}).

The main results are contained in~\S\S\ 3 --- 5. Theorem~\ref{thMainFuncChar} of~\S\ 3 gives characterizations of a mapping normality using analogues of the Urysohn's Lemmas and the Brouwer--Tietze--Urysohn Extension Theorem for mappings.
In its proof a $f$-continuous function separating disjunct closed subsets is explicitly constructed.
In the given construction a $f$--continuous function which is continuous on the "limit fiber" of the mapping, is obtaned by approximating it by stepwise functions. 
The condition binding the characterization of normality is a generalized analogue of small Urysohn's Lemma (Lemma~\ref{lemFSetPartitionofNormalMap}).

In~\S\ 4 the concept of a family of functions $f$-equicontinuous at a point is introduced (Definition~\ref{defEquicontMapSeq}). This concept made it possible to characterize the $\sigma$-normality of a mapping (Theorem~\ref{thFuncCharSigmNorm}).

In~\S\ 5 the concept of a mapping perfect normality is introduced. It uses the family of functions $f$-equicontinuous at a point and extends Vedenisov's perfect normality condition for spaces to mappings (Definition~\ref{defCommonPerfNorm}). The introduced concept of perfect normality is a hereditary property (Proposition~\ref{prHerFuncPerfNormMap}) and a perfectly normal mapping is hereditarily normal (Theorem~\ref{thCommonPerfNormMapIsHerNorm}). Proposition~\ref{prHerFuncPerfNormMapbb} shows that this definition of a mapping perfect normality is, in fact, intermediate, between the previously introduced concepts of $co$-$\sigma$--perfect normality and $co$--perfect normality, and appears to be optimal.

\bigskip


All spaces are topological spaces, $\tau_{X}$ is the topology of the space $X$,  $\mathcal{N}(x)$ is the family of open neighborhoods of a point $x \in X$, mapping $f: X\to Y$ is continuous. The first countable ordinal is $\omega_{0} = \{ 0 \} \cup \mathbb{N} $. The sequence of non-negative integers $a, a+1, \ldots, b-1, b $ is denoted by $\overline{a,b}$.

Real valued mappings (not necessarily continuous) $\varphi: X\to\mathbb R$ are called functions. The {\it norm} of a bounded function $\varphi : X \to \mathbb{R}$ is $||\varphi||= \sup\limits_{x \in X} |\varphi(x)|$, its 
{\it oscillation at a point $x \in X$} is 
\begin{equation*}
\mathrm{osc}_{\varphi}(x) = \inf \{ \sup\limits_{y \in U} |\varphi(x)-\varphi(y)|: U \in \mathcal{N} (x)\}\geqslant 0.
\end{equation*}

For a non-empty subset $A \subset X$ we set $\mathrm{osc}_{\varphi}(A) = \mathrm{sup} \{ \mathrm{osc}_{\varphi}(x): x \in A \}$ and $\mathrm{osc}(\varnothing) = 0$. 

\begin{remark}\label{remOsc}
It is easy to check the following.

$\mathrm{osc}_{\varphi}(x)\leqslant 2 \cdot ||\varphi||$ for any point $x \in X$, and the function $\varphi$ is continuous at $x$ iff $\mathrm{osc}_{\varphi}(x)=0$.

For any bounded functions $\varphi : X \to\mathbb{R}$, $\psi : X \to\mathbb{R}$, and any numbers $\alpha, \beta\in\mathbb {R}$ the following  inequality holds 
\begin{equation*}
\mathrm{osc}_{\alpha\varphi+\beta\psi}(A)\leqslant |\alpha|\cdot\mathrm{osc}_{\varphi}(A)+|\beta|\cdot\mathrm{ osc}_{\psi}(A).
\end{equation*}
\end{remark}

\begin{lemma} \label{lemOscDisj} For a bounded function $\varphi : X \to\mathbb{R}$ and numbers $ a < b $ such that $b - a > \mathrm{osc}_{\varphi} (X)$ the following hold
\begin{equation}
	\begin{aligned}
	\{x\in X\ |\ \varphi(x)\leqslant a \} \cap \mathrm{cl} \{x\in X\ |\ \varphi(x)\geqslant b\} = \varnothing, \ \
	\mathrm{cl} \{x\in X\ |\ \varphi(x)\leqslant a\}\cap\{x\in X\ |\ \varphi(x)\geqslant b\} = \varnothing.
	\end{aligned}
	\label{eqnoO1}
	\end{equation}
\end{lemma}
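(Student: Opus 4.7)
The plan is to prove both statements by direct contradiction, using the definition of oscillation at a point. The two identities in \eqref{eqnoO1} are symmetric (swap the roles of $a \leq \varphi$ and $\varphi \geq b$, i.e.\ replace $\varphi$ by $-\varphi$ and $(a,b)$ by $(-b,-a)$), so it suffices to verify the first.

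Suppose, for contradiction, that there exists $x \in \{y\in X\mid \varphi(y)\leqslant a\} \cap \mathrm{cl}\{y\in X\mid \varphi(y)\geqslant b\}$. Then $\varphi(x) \leqslant a$, and every $U \in \mathcal{N}(x)$ contains some $y_U$ with $\varphi(y_U) \geqslant b$. Hence
\begin{equation*}
\sup_{y \in U}|\varphi(x)-\varphi(y)| \;\geqslant\; \varphi(y_U) - \varphi(x) \;\geqslant\; b - a
\end{equation*}
for every $U \in \mathcal{N}(x)$. Taking the infimum over $U$ gives $\mathrm{osc}_{\varphi}(x) \geqslant b - a$, and consequently $\mathrm{osc}_{\varphi}(X) \geqslant b - a$, contradicting the hypothesis $b - a > \mathrm{osc}_{\varphi}(X)$.

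The second identity follows by applying the first to the function $-\varphi$, whose oscillation coincides with that of $\varphi$, with the pair $(-b,-a)$ in place of $(a,b)$. There is no real obstacle here: the only subtle point is the correct reading of the oscillation — namely that $\mathrm{osc}_{\varphi}(x)$ is the infimum over neighborhoods of the supremum of $|\varphi(x)-\varphi(y)|$ (so that $\varphi(x)$ itself is pinned and one gets a one-sided bound $\varphi(y_U)-\varphi(x)$), which is precisely the form used in the definition just above the lemma.
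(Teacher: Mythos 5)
Your proof is correct and rests on the same key estimate as the paper's: at any point of $\{\varphi\leqslant a\}$ lying in the closure of $\{\varphi\geqslant b\}$ the oscillation would be at least $b-a$. The paper phrases this contrapositively, assembling from these pointwise neighborhoods an open set containing $\{\varphi\leqslant a\}$ and missing $\{\varphi\geqslant b\}$, and likewise disposes of the second identity by symmetry, so the two arguments are essentially identical.
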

	
\begin{proof} Let the sets $A=\{x\in X\ |\ \varphi(x)\leqslant a\}$ and $B=\{x\in X\ |\ \varphi(x) \geqslant b\}$ be non-empty.
	
For any point $x \in A$ there is its open neighborhood $\mathcal{O}x$ such that
	\begin{equation*}
		 |\varphi(x) - \varphi(z)| < b - a \text{ for any point } z\in \mathcal{O}x.
	\end{equation*}
Then $A\subset \bigcup\{\mathcal{O}x\ |\ \varphi(x)\leqslant a\} = \mathcal{O} $ and $ \varphi(z) < b$ for any point $ z \in \mathcal{O} $.
The set $\mathcal{O}$ is open and $\mathcal{O} \cap B= \varnothing$. Thus, the first formula in~(\ref{eqnoO1}) is proved. The second formula in~(\ref{eqnoO1}) can be proved similarly.
\end{proof}


\section{$f$--continuity of a function at a point}

\begin{definition}\cite[p. 111, the definition of a $\mathcal{G}_{y}(f)$-finally continuous function]{Zubov}
For a mapping $f: X \to Y$, a bounded function $\varphi: X \to\mathbb{R}$ is called $f$--{\it continuous at a point} $y \in Y$ 

if for any $ \varepsilon > 0$ there is a neighborhood $\mathcal{O}y$ of the point $y$ such that $\mathrm{osc}_{\varphi}(f^{-1}\mathcal{O}y) < \varepsilon $.
\label{defFcontMap}
\end{definition}

\begin{remark} \label{remFcontMap} From the Definition~{\rm\ref{defFcontMap}} it follows that a $f$--continuous at a point $y$ function $\varphi$ is continuous at every point $x \in f^ {-1}G$ of some $G_{\delta}$--subset $G \subset Y$, $y \in G$.
In particular, at every point of the fiber $f^{-1}y$.

Any bounded function $\varphi: X \to\mathbb{R}$ is $f$-continuous at the point $y \in Y$ if $y \not\in \mathrm{cl} \big( f(X) \big)$.

Linear combination $\alpha\varphi+\beta\psi$ of $f$-continuous at the point $y \in Y$ functions $\varphi : X \to\mathbb{R}$, $\psi : X \to\mathbb {R}$, $\alpha, \beta\in\mathbb{R}$, is a $f$-continuous at the point $y \in Y$ function. $f$-continuous at a point $y$ functions form a linear subspace of functions.
\end{remark}

\begin{lemma} \label{lemFContFunc} Let for a mapping $f: X \to Y$ and a point $y\in Y$ the pairs $(\mathcal{O}_{n}, \varphi_{n})$, where $\mathcal{O}_{n}$ is a neighborhood of $y$, $\varphi_{n}: X\to [0, 1]$, $n \in \omega_{0}$,  are such that
\begin{itemize}
	\item[\rm(a)] $\mathcal{O}_{n+1}\subset \mathcal{O}_{n}$, $n\in\omega_0$, $\mathcal{O}_{ 0} = Y$;
	\item[\rm(b)] the sequence $\{ \mathrm{osc}_{\varphi_{n}}(f^{-1}\mathcal{O}_{n}) \}_{n \in \mathbb{N}}$ converges to $0$;
	\item[\rm(c)] the number series $\sum\limits_{n=0}^{\infty}||\varphi_{n+1}|_{f^{-1}\mathcal{O}_{ n+1}} - \varphi_{n}|_{f^{-1}\mathcal{O}_{n+1}}||$ converges.
\end{itemize}
Then the function $\varphi: X \to [0, 1]$
\begin{equation*}
	\varphi (x) = \left\{ \begin{array}{ll}
	\varphi_{n}(x), \ & x \in f^{-1}\mathcal{O}_{n} \setminus f^{-1}\mathcal{O}_{n+1}, n \in \omega_{0},\\
	\lim\limits_{n \to \infty}\varphi_{n}(x), \ & x\in\bigcap\limits_{n=0}^{\infty} f^{-1}\mathcal{O} _{n}\\
	\end{array}\right.
\end{equation*}
is $f$-continuous at $y \in Y$.
\end{lemma}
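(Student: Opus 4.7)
The plan is to verify three things in turn: $\varphi$ is well-defined with values in $[0,1]$, the approximations $\varphi_n$ are close to $\varphi$ uniformly on $f^{-1}\mathcal{O}_n$, and the oscillation of $\varphi$ on $f^{-1}\mathcal{O}_n$ can be bounded by quantities that tend to $0$.

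First I would check that the formula for $\varphi$ makes sense. Since $\mathcal{O}_0 = Y$, we have $f^{-1}\mathcal{O}_0 = X$, and by (a) the sets $f^{-1}\mathcal{O}_n$ form a decreasing sequence, so $X$ is the disjoint union of the ``rings'' $f^{-1}\mathcal{O}_n\setminus f^{-1}\mathcal{O}_{n+1}$ and the ``core'' $\bigcap_n f^{-1}\mathcal{O}_n$; on the first kind of piece $\varphi$ takes the value $\varphi_n(x)\in[0,1]$, on the core we must check the limit exists. For $x\in\bigcap_n f^{-1}\mathcal{O}_n$ we have $x\in f^{-1}\mathcal{O}_{n+1}$ for every $n$, so $|\varphi_{n+1}(x)-\varphi_n(x)|\leqslant \|\varphi_{n+1}|_{f^{-1}\mathcal{O}_{n+1}}-\varphi_n|_{f^{-1}\mathcal{O}_{n+1}}\|$; by (c) this is the general term of a convergent series, so $\{\varphi_n(x)\}$ is Cauchy, its limit lies in $[0,1]$, and $\varphi$ is well-defined as a map $X\to[0,1]$.

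Next I would introduce the tails $R_n \eqdef \sum_{k=n}^{\infty}\|\varphi_{k+1}|_{f^{-1}\mathcal{O}_{k+1}}-\varphi_k|_{f^{-1}\mathcal{O}_{k+1}}\|$, noting $R_n\to 0$ by (c), and prove the uniform approximation
\begin{equation*}
|\varphi(x)-\varphi_n(x)|\leqslant R_n\quad\text{for every } x\in f^{-1}\mathcal{O}_n.
\end{equation*}
The argument is a telescoping estimate: if $x$ belongs to the ring $f^{-1}\mathcal{O}_m\setminus f^{-1}\mathcal{O}_{m+1}$ for some $m\geqslant n$, then
$\varphi(x)-\varphi_n(x)=\sum_{k=n}^{m-1}\bigl(\varphi_{k+1}(x)-\varphi_k(x)\bigr)$, and each $x$ lies in $f^{-1}\mathcal{O}_{k+1}$ for $k\leqslant m-1$, so each term is dominated by $\|\varphi_{k+1}|_{f^{-1}\mathcal{O}_{k+1}}-\varphi_k|_{f^{-1}\mathcal{O}_{k+1}}\|$; summing gives $R_n$. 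If $x$ lies in the core, the same argument passes to the limit as $m\to\infty$.

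Finally I would estimate the oscillation. Fix $\varepsilon>0$. For any $x\in f^{-1}\mathcal{O}_n$, since $f^{-1}\mathcal{O}_n$ is an open neighborhood of $x$, we may restrict attention to neighborhoods $U\subset f^{-1}\mathcal{O}_n$ and use
$|\varphi(x)-\varphi(y)|\leqslant 2R_n + |\varphi_n(x)-\varphi_n(y)|$ for $y\in U$, which upon taking $\sup_{y\in U}$ and then $\inf$ over $U\in\mathcal{N}(x)$ yields $\mathrm{osc}_{\varphi}(x)\leqslant 2R_n+\mathrm{osc}_{\varphi_n}(x)$. Taking $\sup$ over $x\in f^{-1}\mathcal{O}_n$ gives $\mathrm{osc}_{\varphi}(f^{-1}\mathcal{O}_n)\leqslant 2R_n+\mathrm{osc}_{\varphi_n}(f^{-1}\mathcal{O}_n)$. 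By (b) and $R_n\to 0$, we can choose $n$ so that the right-hand side is $<\varepsilon$, and $\mathcal{O}y\eqdef\mathcal{O}_n$ is the desired neighborhood, proving $f$-continuity at $y$.

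The main obstacle is the estimate $|\varphi(x)-\varphi_n(x)|\leqslant R_n$: one must treat the ring points and the core points uniformly and verify that the telescoped differences at a point $x$ of $f^{-1}\mathcal{O}_m$ are indeed controlled by the restricted norms on $f^{-1}\mathcal{O}_{k+1}$ (rather than on all of $X$), which is exactly why hypothesis (c) is stated with restrictions to $f^{-1}\mathcal{O}_{n+1}$ instead of the full norms.
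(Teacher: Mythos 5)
Your proposal is correct and follows essentially the same route as the paper's proof: well-definedness of $\varphi$ on the core via the Cauchy/uniform-convergence consequence of (c), a telescoping tail bound $|\varphi(x)-\varphi_n(x)|\leqslant R_n$ on $f^{-1}\mathcal{O}_n$ (the paper's inequality (1.1)), and a three-term triangle inequality combining this with $\mathrm{osc}_{\varphi_n}$ to bound $\mathrm{osc}_{\varphi}(f^{-1}\mathcal{O}_n)$. The only difference is cosmetic: you package the estimate as $2R_n+\mathrm{osc}_{\varphi_n}(f^{-1}\mathcal{O}_n)$ where the paper runs an explicit $\varepsilon/3+\varepsilon/3+\varepsilon/3$ argument.
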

	
\begin{proof} For any $n, m \in \mathbb{N}$ the following inequality holds
	\begin{equation}
	\begin{aligned}
	||\varphi_{n+m}|_{f^{-1}\mathcal{O}_{n+m}} - \varphi_{n}|_{f^{-1}\mathcal{O} _{n+m}}|| \leqslant ||\varphi_{n+m}|_{f^{-1}\mathcal{O}_{n+m}}-\varphi_{n+m-1}|_{f^{-1 }\mathcal{O}_{n+m}}|| +\ldots\\
	\ldots +||\varphi_{n+1}|_{f^{-1}\mathcal{O}_{n+1}}-\varphi_{n}|_{f^{-1}\mathcal {O}_{n+1}}||. \label{eqno(1.1)}
	\end{aligned}
	\end{equation}
It follows from condition (c) and inequality (\ref{eqno(1.1)}) that the functional sequence $\{ \varphi_{n}|_{f^{-1}(\bigcap\limits_{n=0}^ {\infty} \mathcal{O}_{n})}\}_{n \in \mathbb{N}}$ converges uniformly on $f^{-1}(\bigcap\limits_{n=0}^ {\infty}\mathcal{O}_{n})$. Thus, the function $\varphi: X\to [0, 1]$ is well defined.
	
Let $\varepsilon > 0$ be arbitrary. By condition (b) there exists $N_{1} \in \mathbb{N}$ such that for all $n \geqslant N_{1}$, $\mathrm{osc}_{\varphi_{n}}(f ^{-1}\mathcal{O}_{n})<\frac{\varepsilon}{3}$. By condition (c), there exists $N_{2}\in \mathbb{N}$ such that $\sum\limits_{n=N_{2}}^{\infty}||\varphi_{n+1}| _{f^{-1}\mathcal{O}_{n+1}}-\varphi_{n}|_{f^{-1}\mathcal{O}_{n+1}}|| < \frac{\varepsilon}{3}$. Consider $N=\max\{N_1, N_2\}$. For any point $ x \in f^{-1}\mathcal{O}_{N}$ there exists a neighborhood $ U \in \mathcal{N }(x)$, $U\subset f^{-1}\mathcal{O}_{N}$, such that for any $\widetilde{x} \in U$ the following holds 
	\begin{equation} \label{eqno(1.11)}
	|\varphi_{N}(x)-\varphi_{N}(\widetilde{x})|<\frac{\varepsilon}{3}.
	\end{equation}
	
Since $\varphi (x) = \varphi_{N+m}(x)$, for $ x \in f^{-1}\mathcal{O}_{N + m} \setminus f^{-1 }\mathcal{O}_{N + m +1}$, or $\varphi (x) = \lim\limits_{n \to \infty}\varphi_{n}(x)$ otherwise (for $ \varphi (\widetilde{x})$ similarly), then from the condition $N \geqslant N_{2}$ and the inequality (\ref{eqno(1.1)}) it follows that
	\begin{equation}
	|\varphi (x)-\varphi_{N}(x)| \leqslant \frac{\varepsilon}{3} \text{ and } |\varphi (\widetilde{x})-\varphi_{N} (\widetilde{x})| \leqslant \frac{\varepsilon}{3}
	\label{eqno(1.111)}
	\end{equation}
	
Hence for any point $ \widetilde{x} \in U$ the following inequality holds 
	\begin{equation*}
	|\varphi (x)-\varphi (\widetilde{x})| \leqslant |\varphi (x)-\varphi_{N}(x)|+|\varphi_{N}(x)-\varphi_{N}(\widetilde{x})| + |\varphi_{N}(\widetilde{x}) - \varphi (\widetilde{x})|.
	\end{equation*}
Further, from (\ref{eqno(1.11)}) and (\ref{eqno(1.111)}) it follows that
	\begin{equation*}
	|\varphi (x)-\varphi (\widetilde{x})| < \frac{\varepsilon}{3} + \frac{\varepsilon}{3} + \frac{\varepsilon}{3} = \varepsilon.
	\end{equation*}
Therefore, $\mathrm{osc}_{\varphi}(f^{-1}\mathcal{O}_{N}) \leqslant \varepsilon$, and $f$-continuity of  $\varphi$ at $y$ is proved.
\end{proof}

\begin{definition} \label{defCorKfraction}
	{\rm (a)} For $k \in \mathbb{N}$ the partition $U^{0},\ldots, U^{k-1}$ of the space $X$ is called a {\it regular $k$--partition} if
	\begin{itemize}
	\item[{\rm (1)}] the set $\bigcup\limits_{m=0}^{p} U^{m}$ is closed in $X$ for $p\in\overline{0, k -1}$
	
	\noindent {\rm (}equivalently the set $\bigcup\limits_{m=l}^{k-1} U^{m}$ is open in $X$ for $l\in\overline{0, k- 1}${\rm )};
	\item[{\rm (2)}] $(\bigcup\limits_{m=0}^{p} U^{m}) \cap \mathrm{cl}(\bigcup\limits_{m=p+2 }^{k-1} U^{m}) = \varnothing,\ p\in\overline{0, k-3}$, $k \geqslant 3$.
	\end{itemize}
\end{definition}
	
\begin{lemma} For a regular $k$-partition, $k \geqslant 3$, of the space $X$, the following holds
	
	{\rm ($\ast$) $\bigcup\limits_{m=0}^{k-2}\mathrm{int} (U^{m} \cup U^{m+1}) = X$. }
	\label{remCorrPart}
\end{lemma}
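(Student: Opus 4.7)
The plan is to reformulate the hypotheses in terms of the descending chain of ``upper tails'' $V_l \eqdef \bigcup_{m=l}^{k-1} U^m$ for $l \in \overline{0,k-1}$. Condition~(1) is precisely the statement that every $V_l$ is open in $X$. Rewriting condition~(2) via the substitution $q = p+2$ gives the nested--closure relation
\begin{equation*}
\mathrm{cl}(V_q) \subset V_{q-1} \text{ for every } q \in \overline{2,k-1},
\end{equation*}
since $(X\setminus V_{q-1}) \cap \mathrm{cl}(V_q) = \varnothing$. This ``shrinking'' property is the one substantive consequence of regularity that I intend to use.

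Given an arbitrary $x \in X$, the partition property supplies a unique $j \in \overline{0,k-1}$ with $x \in U^j$, and I would split into two cases according to the position of $j$.

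For $j \leqslant k-3$ I would take as the witnessing open set $G \eqdef V_j \setminus \mathrm{cl}(V_{j+2})$; this is open because $V_j$ is open and $\mathrm{cl}(V_{j+2})$ is closed. The identity $U^j = V_j \setminus V_{j+1}$ combined with the shrinking $\mathrm{cl}(V_{j+2}) \subset V_{j+1}$ forces $U^j \cap \mathrm{cl}(V_{j+2}) = \varnothing$, so $x \in U^j \subset G$; and $G \subset V_j \setminus V_{j+2} = U^j \cup U^{j+1}$, placing $x$ in $\mathrm{int}(U^j \cup U^{j+1})$. For $j \in \{k-2, k-1\}$, the single open set $V_{k-2} = U^{k-2} \cup U^{k-1}$ already contains $x$, so $x \in \mathrm{int}(U^{k-2} \cup U^{k-1})$ directly from condition~(1). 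In either case $x$ belongs to some $\mathrm{int}(U^m \cup U^{m+1})$ with $m \in \overline{0, k-2}$, which gives ($\ast$).

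I do not expect any genuine obstacle. The only conceptual move is recognizing that the awkward condition~(2) is merely the nested--closure relation $\mathrm{cl}(V_q) \subset V_{q-1}$, after which the neighborhood $V_j \setminus \mathrm{cl}(V_{j+2})$ essentially writes itself down. The boundary cases $j \in \{k-2, k-1\}$ are the only place where a separate argument is needed, and they are handled at once by openness of $V_{k-2}$.
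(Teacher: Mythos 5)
Your proof is correct and follows essentially the same route as the paper's: both establish $U^j \subset \mathrm{int}(U^j \cup U^{j+1})$ for $j \leqslant k-3$ by combining condition (2) (your nested-closure relation $\mathrm{cl}(V_{j+2}) \subset V_{j+1}$) with the openness of the upper tail from condition (1), and both dispose of $j \in \{k-2,k-1\}$ via the openness of $U^{k-2}\cup U^{k-1}$. Your explicit witness $V_j \setminus \mathrm{cl}(V_{j+2})$ is just a cleaner packaging of the open set the paper obtains as $\mathrm{int}\bigl(\bigcup_{m=0}^{j+1}U^m\bigr)\cap \bigcup_{m=j}^{k-1}U^m$.
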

	
\begin{proof}
	Let a regular $k$-partition $\{ U^{m} \}^{k-1}_{m=0}$ of $X$ be given.
	For each $i \in \overline{0,k-3}$ let us show that $U^{i} \subset \mathrm{int}(U^{i} \cup U^{i+1})$.
	By condition (2) of Definition~\ref{defCorKfraction} we have $\bigcup\limits_{m=0}^{i}U^{m} \ \cap \mathrm{cl} \bigcup\limits_{m=i+2 }^{k-1}U^{m} = \varnothing$.
	Then, $U^{i} \subset X \setminus \mathrm{cl} \bigcup\limits_{m=i+2}^{k-1}U^{m} = \mathrm{int}\Big( X \setminus \mathrm{cl} \bigcup\limits_{m=i+2}^{k-1}U^{m}\Big) \subset \mathrm{int}\Big( X \setminus \bigcup\limits_{ m=i+2}^{k-1}U^{m}\Big) = \mathrm{int} \Big( \bigcup\limits_{m=0}^{i+1}U^{m} \Big)$.
	Since $U^{i} \subset \bigcup\limits_{m=i}^{k-1}U^{m} $, then from condition (1) of Definition~\ref{defCorKfraction} we obtain that
	$$U^{i} \subset \mathrm{int} \Big( \bigcup\limits_{m=0}^{i+1}U^{m} \Big) \cap \bigcup\limits_{m=i }^{k-1}U^{m} = \mathrm{int} \Big( \bigcup\limits_{m=0}^{i+1}U^{m} \cap \bigcup\limits_{m=i}^{k-1}U^{m} \Big) = \mathrm{int}(U^{i} \cup U^{i+1}).$$
	By condition (1) of Definition~\ref{defCorKfraction} the union $U^{k-2} \cup U^{k-1} $ is open and $U^{k-2}, U^{k-1} \subset \mathrm{int}(U^{k-2} \cup U^{k-1})$. Finally, we obtain
	
	\begin{equation*}
	X = \bigcup\limits_{m=0}^{k-1} U^{m} \subset \bigcup\limits_{m=0}^{k-2}\mathrm{int} (U^{m} \cup U^{m+1}).
	\end{equation*}
\end{proof}

\begin{definition} \label{defCoherenceSetBinPart}
For a mapping $f: X\to Y$ and a point $y\in Y$,
the sequence $\{\mathcal{O}_{n}\}_{n\in \omega_{0}}$, $\mathcal{O}_{0}=Y$, $\mathcal{O}_{n+1}\subset \mathcal{O}_{n}$ of neighborhoods of $y$  and
families $\{U_{n}^{k} |\ k\in\overline{0, 2^{n}-1} \}_{n\in \omega_{0}}$ of regular $2^{n}$-partitions of subspaces $f^{-1} \mathcal{ O}_{n}$, are called {\it a consistent family of binary partitions of a mapping $f: X\to Y$ at a point $y\in Y$} if
\begin{equation*}
	U_{n+1}^{2k}\cup U_{n+1}^{2k+1}=U_{n}^{k}\cap f^{-1}\mathcal{O}_{n+ 1},\ k\in\overline{0, 2^{n}-1}.
\end{equation*}
\end{definition}
	
\begin{proposition} \label{prFContMapOnCorPart} Let for a mapping $f: X\to Y$ and a point $y\in Y$ a consistent family $\{\mathcal{O}_{n}, \{U_{n }^{k}|\ k\in\overline{0, 2^{n}-1}\}|\ n\in\mathbb\omega_0\}$ of binary partitions of a mapping $f: X\to Y$ at $ y\in Y$ is given.
	
Then, for the family of functions
	$\varphi_{n}: X \to [0,1]$
	\begin{equation*}
	\varphi_{n} (x) = \left\{ \begin{array}{ll}
	0, \ & x \in X\setminus f^{-1} \mathcal{O}_{n}, \\
	\frac{k}{2^{n}-1}, \ & x \in U^{k}_{n}, \ k\in\overline{0, 2^{n}-1}, \\
	\end{array}\right.
	\end{equation*}
the pairs $(\mathcal{O}_{n}, \varphi_{n}), \ n \in \omega_{0}$, satisfy the conditions of Lemma {\rm\ref{lemFContFunc}} and define a $f$--continuous at $y\in Y$ function
	$\varphi: X \to [0,1]$
	\begin{equation*}
	\varphi (x) = \left\{ \begin{array}{ll}
	\varphi_{n}(x), \ & x \in f^{-1}\mathcal{O}_{n} \setminus f^{-1}\mathcal{O}_{n+1},\ n\in\mathbb\omega_0,\\
	\lim\limits_{n \to \infty}\varphi_{n}(x), \ & x\in\bigcap\limits_{n=0}^{\infty} f^{-1}\mathcal{O} _{n}.\\
	\end{array}\right.
	\end{equation*}
Moreover, ${\rm osc}_{\varphi}(f^{-1}\mathcal{O}_{n}) \leqslant \frac{1}{2^{n}-1}$.
\end{proposition}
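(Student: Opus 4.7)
The plan is to verify the three hypotheses (a)--(c) of Lemma~\ref{lemFContFunc} for the pairs $(\mathcal{O}_n, \varphi_n)$ and then extract the quantitative oscillation bound from the construction.

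Hypothesis (a), $\mathcal{O}_{n+1}\subset\mathcal{O}_n$ with $\mathcal{O}_0 = Y$, is part of Definition~\ref{defCoherenceSetBinPart} and requires no work. For hypothesis (b), I would in fact establish the stronger estimate $\mathrm{osc}_{\varphi_n}(f^{-1}\mathcal{O}_n) \leqslant \frac{1}{2^n - 1}$. Applying Lemma~\ref{remCorrPart} to the regular $2^n$-partition $\{U_n^k\}$ of the subspace $f^{-1}\mathcal{O}_n$, which is open in $X$, we cover $f^{-1}\mathcal{O}_n$ by the open sets $\mathrm{int}(U_n^m\cup U_n^{m+1})$, $m\in\overline{0, 2^n-2}$. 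On each such open set $\varphi_n$ attains only the two consecutive values $\frac{m}{2^n-1}$ and $\frac{m+1}{2^n-1}$, so the pointwise oscillation there is at most $\frac{1}{2^n-1}$, and in particular the sequence in (b) tends to $0$.

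Hypothesis (c) is the main arithmetic. Fix $x\in f^{-1}\mathcal{O}_{n+1}$ and let $k$ be the unique index with $x\in U_n^k$; by consistency, $x$ belongs to $U_{n+1}^{2k}$ or $U_{n+1}^{2k+1}$. Direct evaluation of $\varphi_{n+1}(x)-\varphi_n(x)$ gives $-\frac{k}{(2^n-1)(2^{n+1}-1)}$ in the first case and $\frac{2^n-k-1}{(2^n-1)(2^{n+1}-1)}$ in the second; since $0\leqslant k\leqslant 2^n-1$, both absolute values are bounded by $\frac{1}{2^{n+1}-1}$. Hence the series in (c) is dominated by $\sum_n\frac{1}{2^{n+1}-1}$, which converges geometrically. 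Lemma~\ref{lemFContFunc} then produces the function $\varphi$ of the statement and its $f$-continuity at $y$.

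The ``moreover'' clause $\mathrm{osc}_\varphi(f^{-1}\mathcal{O}_n)\leqslant\frac{1}{2^n-1}$ is the delicate point. I would retrace the last step of the proof of Lemma~\ref{lemFContFunc}, but retain the explicit bounds from (b) and (c) above and exploit that on each open piece $\mathrm{int}(U_n^m\cup U_n^{m+1})$ the values of $\varphi$ lie in a controlled dyadic interval obtained by iterating the consistency relation, so that the oscillation of $\varphi$ inherits the estimate from $\mathrm{osc}_{\varphi_n}$. The main obstacle is precisely in getting the constant $\frac{1}{2^n-1}$ rather than the looser bound a blind triangle inequality (combining the (b) estimate with the tail of the series in (c)) would deliver; the tightening rests on the dyadic cascading $\frac{k}{2^n-1}\rightsquigarrow\frac{2k}{2^{n+1}-1},\frac{2k+1}{2^{n+1}-1}$ already established during the verification of~(c).
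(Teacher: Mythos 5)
Your verification of hypotheses (a)--(c) of Lemma~\ref{lemFContFunc} is correct and coincides with the paper's own argument: condition (b) is obtained, exactly as in the paper, from the covering $\bigcup_{m}\mathrm{int}(U_n^m\cup U_n^{m+1})$ of $f^{-1}\mathcal{O}_n$ supplied by Lemma~\ref{remCorrPart} (on each piece $\varphi_n$ takes only two consecutive values $\tfrac{m}{2^n-1},\tfrac{m+1}{2^n-1}$), and condition (c) is the same computation of $\tfrac{k'}{2^{n+1}-1}-\tfrac{k}{2^n-1}$ for $k'\in\{2k,2k+1\}$, majorized by $\sum_n\tfrac{1}{2^{n+1}-1}$. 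Up to that point the proposal is the paper's proof.

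The ``moreover'' clause, however, is left as a plan rather than a proof, and this is a genuine gap: you say what you \emph{would} do and explicitly name the obstacle (obtaining the constant $\tfrac{1}{2^n-1}$ rather than a looser one) without resolving it. Concretely, the dyadic cascading you invoke yields $\varphi(U_n^k)\subset[\tfrac{k}{2^n},\tfrac{k+1}{2^n}]$, so on $\mathrm{int}(U_n^m\cup U_n^{m+1})$ it only delivers an oscillation bound of $\tfrac{2}{2^n}$, which exceeds $\tfrac{1}{2^n-1}$ for every $n\geqslant 2$; to tighten this to the stated constant one would have to bring in the separation conditions (2) of Definition~\ref{defCorKfraction} at the deeper levels of the partition, not merely the interval containment. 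For comparison, the paper dispatches this point in a single line, asserting $\mathrm{osc}_{\varphi}(f^{-1}\mathcal{O}_n)\leqslant\mathrm{osc}_{\varphi_n}(f^{-1}\mathcal{O}_n)$ ``from the definition of $\varphi$'' with no further argument, so your hesitation is understandable; and in all later applications (the choice of $\mathcal{O}y$ with $\mathrm{osc}_\varphi<\tfrac12$ in Theorem~\ref{thMainFuncChar}, the equicontinuity in Theorem~\ref{thFuncCharSigmNorm}) only a bound uniform over the partitions and tending to $0$ is used, so the estimate $\tfrac{2}{2^n}$ that your argument actually proves would suffice there. Nevertheless, as written your proposal does not establish the inequality claimed in the statement.
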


\begin{proof}
Let's check the fulfillment of conditions (b) and (c) of Lemma \ref{lemFContFunc}
		
From the condition ($\ast$) of  Lemma \ref{remCorrPart} for a regular $2^{n}$-partition $ \{ U^{k}_{n}|\ k\in\overline{0, 2^{n }-1}\}$ of the subspace $f^{-1}\mathcal{O}_{n}$ we have
		
\begin{equation*} \label{eq4UrysLema0}
	\text{osc}_{\varphi_{n}}(f^{-1}\mathcal{O}_{n}) \leqslant \frac{1}{2^{n}-1}, \text{ for } n \in \omega_{0}.
\end{equation*}
Thus, $\lim\limits_{n \to \infty} \mathrm{osc}_{\varphi_{n}}(f^{-1}\mathcal{O}_{n}) = 0$, and the condition (b) of Lemma \ref{lemFContFunc} is satisfied.
		
For any point $x\in f^{-1}\mathcal{O}_{n+1}$ we have $|\varphi_{n+1}(x)-\varphi_{n}(x)|=| \frac{k'}{2^{n+1}-1}-\frac{k}{2^{n}-1}|$. Since $k' = 2k + 1$ or $k' = 2k$ and $k\in\overline{0, 2^{n}-1}$, then
\begin{equation} \label{eq4UrysLema}
\begin{split}
|\varphi_{n+1}(x)-\varphi_{n}(x)| & \leqslant \bigg| \frac{2k+1}{2^{n+1}-1}-\frac{k}{2^{n}-1}\bigg| =\bigg| \frac{2^{n}-1-k}{(2^{n+1}-1)(2^{n}-1)}\bigg| \leqslant \frac{1}{2^{n+1}-1}, \\
|\varphi_{n+1}(x)-\varphi_{n}(x)| & \leqslant \bigg| \frac{2k}{2^{n+1}-1}-\frac{k}{2^{n}-1}\bigg| =\bigg| \frac{k}{(2^{n+1}-1)(2^{n}-1)}\bigg| \leqslant \frac{1}{2^{n+1}-1}.
\end{split}
\end{equation}
		 
So from (\ref{eq4UrysLema})
\begin{equation*}
	\sum\limits_{n=0}^{\infty}||\varphi_{n+1}|_{f^{-1}\mathcal{O}_{n+1}} - \varphi_{n} |_{f^{-1}\mathcal{O}_{n+1}}|| \leqslant \sum_{n=0}^{\infty} \frac{1}{2^{n+1}-1},
\end{equation*}
the majorizing series converges and, thus, the condition (c) of Lemma \ref{lemFContFunc} is satisfied.
		
It follows from the definition of $\varphi$ that $\text{osc}_{\varphi}(f^{-1}\mathcal{O}_{n}) \leqslant \text{osc}_{\varphi_{ n}}(f^{-1}\mathcal{O}_{n}) \leqslant \frac{1}{2^{n}-1}$, for $ n \in \omega_{0}$.		
\end{proof}

\begin{lemma} \label{lemSumFcontMapFcont}
	Let a mapping $f: X \to Y$, a point $y\in Y$ and a family of {\rm (}bounded{\rm)} $f$-continuous at $y \in Y$ functions $\varphi_{ n}: X\to\mathbb{R}$, $n \in \mathbb{N}$, be such that the series $\sum\limits_{n=1}^{\infty}||\varphi_{n} ||$ converges.
	Then the function $\varphi (x) = \sum\limits_{n=1}^{\infty}\varphi_{n}(x)$ is $f$-continuous at $y$.
	\end{lemma}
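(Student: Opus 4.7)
The plan is to approximate $\varphi$ by partial sums $S_N = \sum_{n=1}^{N}\varphi_n$ and exploit two facts from the excerpt: (i) a finite linear combination of $f$-continuous at $y$ functions is again $f$-continuous at $y$ (Remark~\ref{remFcontMap}), and (ii) the oscillation of a sum is subadditive, while the oscillation of any bounded function $\psi$ is uniformly bounded by $2\|\psi\|$ (Remark~\ref{remOsc}). Thus the tail of the series controls the oscillation of $\varphi - S_N$ on \emph{every} subset of $X$ at once, while $f$-continuity at $y$ handles the head $S_N$ by choosing a suitable neighborhood of~$y$.

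First I would verify that $\varphi$ is well-defined and bounded: since $\sum\|\varphi_n\|$ converges, the Weierstrass $M$-test gives uniform (hence pointwise) convergence of $\sum\varphi_n$ on $X$, and $\|\varphi\|\leqslant\sum_{n=1}^{\infty}\|\varphi_n\|<\infty$. Next, fix $\varepsilon>0$ and choose $N\in\mathbb N$ large enough that $\sum_{n=N+1}^{\infty}\|\varphi_n\|<\varepsilon/4$. Writing $\varphi = S_N + R_N$ with $R_N = \sum_{n>N}\varphi_n$, we have $\|R_N\|<\varepsilon/4$, so by Remark~\ref{remOsc} applied pointwise and then taking suprema, $\mathrm{osc}_{R_N}(A)\leqslant 2\|R_N\|<\varepsilon/2$ for every $A\subset X$.

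Since $S_N$ is a finite linear combination of functions $f$-continuous at $y$, it is itself $f$-continuous at $y$ by Remark~\ref{remFcontMap}. Hence there is a neighborhood $\mathcal{O}y$ of $y$ with $\mathrm{osc}_{S_N}(f^{-1}\mathcal{O}y)<\varepsilon/2$. Applying the subadditivity inequality of Remark~\ref{remOsc} to $\varphi = S_N + R_N$ with $\alpha=\beta=1$, we obtain
\begin{equation*}
\mathrm{osc}_{\varphi}(f^{-1}\mathcal{O}y)\leqslant \mathrm{osc}_{S_N}(f^{-1}\mathcal{O}y) + \mathrm{osc}_{R_N}(f^{-1}\mathcal{O}y) < \tfrac{\varepsilon}{2} + \tfrac{\varepsilon}{2} = \varepsilon,
\end{equation*}
which is exactly the $f$-continuity of $\varphi$ at $y$ from Definition~\ref{defFcontMap}.

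There is no real obstacle here; the only subtle point is to avoid trying to use a $3\varepsilon$-style argument in which one bounds $R_N$ pointwise (which would only give continuity of $\varphi - S_N$, not control of its oscillation on a neighborhood). The clean way is to observe that the Remark~\ref{remOsc} bound $\mathrm{osc}_{R_N}\leqslant 2\|R_N\|$ is \emph{uniform in the set}, so the tail contributes to the oscillation globally without any further appeal to the structure of the $\varphi_n$.
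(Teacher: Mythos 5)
Your proof is correct and follows essentially the same route as the paper: split $\varphi$ into a head $S_N$ and a tail $R_N$ with $\sum_{n>N}\|\varphi_n\|<\varepsilon/4$, control the tail's oscillation on every set by the uniform bound $\mathrm{osc}_{R_N}\leqslant 2\|R_N\|$, and control the head on a suitable neighborhood of $y$ (the paper does this by intersecting neighborhoods $\mathcal{O}_n$ with $\mathrm{osc}_{\varphi_n}(f^{-1}\mathcal{O}_n)<\varepsilon/(2N)$, which is exactly what your appeal to Remark~\ref{remFcontMap} for $S_N$ unpacks to). No gaps.
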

	
\begin{proof}
Since the series $\sum\limits_{n=1}^{\infty}||\varphi_{n}||$ converges, the function $\varphi (x)$ is well defined.
	
Consider an arbitrary $\varepsilon > 0$. Let $N \in \mathbb{N}$ be such that $\sum\limits_{n=N+1}^{\infty}||\varphi_{n}||<\frac{\varepsilon}{4} $. The $f$-continuity of $\varphi_{n}$ yields that there exists a neighborhood $\mathcal{O}_{n}$ of $y$ such that $\mathrm{osc}_{\varphi_{n}} (f^{-1}\mathcal{O}_{n}) < \frac{\varepsilon}{2N}$, $n\leqslant N$.
	
Let $\mathcal{O} = \bigcap\limits_{n=1}^{N}\mathcal{O}_{n}$. Then from Remark~\ref{remOsc}  it follows that
\begin{equation*}
	\begin{aligned}
	\mathrm{osc}_{\varphi}(f^{-1}\mathcal{O}) \leqslant \sum\limits_{n=1}^{N}\mathrm{osc}_{\varphi_{n} }(f^{-1}\mathcal{O}) & + \mathrm{osc}_{\sum\limits_{n=N+1}^{\infty}\varphi_{n}}(f^{- 1}\mathcal{O}) < \\
	< \frac{\varepsilon \cdot N}{2N} & + 2 \sum\limits_{n=N+1}^{\infty}||\varphi_{n}||<\frac{\varepsilon}{2 } + \frac{\varepsilon}{2} = \varepsilon.
	\end{aligned}
\end{equation*}
\end{proof}


\section{Characterizations of normal mappings}

Subsets $A$, $B$ of a space $X$ are called {\it separated by neighborhoods in a subspace} $X^{\prime}\subset X$ \cite[p. 73]{PasOtobrFunct} if the sets $A \cap X^{\prime}$ and $B \cap X^{\prime}$ have disjoint neighborhoods in $X^{\prime}$.
For a mapping $f: X \to Y$,  sets $A, B \subset X$ are called $f-${\it separated by neighborhoods} \cite[p. 73]{PasOtobrFunct}, if any point $y \in Y$ has a neighborhood $\mathcal{O}y$, in the preimage $f^{-1}\mathcal{O}y $ of which the sets $A \cap f^ {-1}\mathcal{O}y$ and $B \cap f^{-1}\mathcal{O}y$ are separated by neighborhoods.

\begin{definition}\cite[p. 73]{PasOtobrFunct}. A mapping $f:X \to Y$ is said to be {\it prenormal} if any two disjoint closed subsets $A$ and $B$ of $X$ are $f$-separated by neighborhoods.

A mapping $f:X \to Y$ is called {\it normal} \cite[p. 52]{MusPas} if for any $\mathcal{O} \in \tau_{Y} $ the restriction $f_{\mathcal{O}}:f^{-1}\mathcal{O} \rightarrow \mathcal{ O}$ of $f$ to $\mathcal{O}$ is prenormal.
\end{definition}

The following statement is a convenient generalization of the ``small Urysohn Lemma'' for a normal mapping~\cite[Proposition 5]{liseev2}.

\begin{lemma}
	Let $f: X \to Y$ be a normal mapping. Then for any $\mathcal{O} \in \tau_{Y}$ and any pair of disjoint closed in $f^{-1}\mathcal{O}$ subsets $F$ and $T$, for any point $y \in \mathcal{O}$ there is a consistent family of binary partitions $\{\mathcal{O}_{n}, \{U_{n}^{k}|\ k\in\overline{0, 2^{n }-1}\}|\ n\in \mathbb\omega_0\}$ of a mapping $f: X \to Y$ at $y$, such that for any $n \in \mathbb{N}$
	\begin{itemize}
	\item[{\rm (a)}] $F\cap f^{-1}\mathcal{O}_n\subset U_{n}^{0}$,\ \ $T\cap f^{-1 }\mathcal{O}_n\subset U_{n}^{2^{n}-1}$\rm;
	\item[{\rm (b)}] $F\cap \mathrm{cl}_{ f^{-1}\mathcal{O}_n}(\bigcup\limits_{k=1}^{2^{ n}-1} U_{n}^{k})=\varnothing,\ \ \mathrm{cl}_{ f^{-1}\mathcal{O}_n}(\bigcup\limits_{k=0} ^{2^{n}-2} U_{n}^{k}) \cap T = \varnothing$.
	\end{itemize}
	\label{lemFSetPartitionofNormalMap}
\end{lemma}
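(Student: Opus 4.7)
The plan is to proceed by induction on $n \in \omega_{0}$, and to reparametrize a regular $2^{n}$-partition $\{U_{n}^{k}\}$ by the nested closed sets $C_{n}^{k} := \bigcup_{m=0}^{k} U_{n}^{m}$ (with $C_{n}^{-1} := \varnothing$). In these terms, conditions (1), (2) of Definition~\ref{defCorKfraction} become: $C_{n}^{k}$ is closed in $f^{-1}\mathcal{O}_{n}$, $C_{n}^{2^{n}-1} = f^{-1}\mathcal{O}_{n}$, and $C_{n}^{p} \subset \mathrm{int}_{f^{-1}\mathcal{O}_{n}}(C_{n}^{p+1})$ for $p \in \overline{0, 2^{n}-3}$; while (a), (b) of the statement become $F \cap f^{-1}\mathcal{O}_{n} \subset \mathrm{int}(C_{n}^{0})$ and $C_{n}^{2^{n}-2} \cap T = \varnothing$. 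The consistency requirement then forces $C_{n+1}^{2k+1} = C_{n}^{k} \cap f^{-1}\mathcal{O}_{n+1}$, so the inductive step reduces to constructing, for each $k \in \overline{0, 2^{n}-1}$, an inserted closed set $C_{n+1}^{2k}$ sandwiched between two disjoint closed subsets of $f^{-1}\mathcal{O}_{n}$. The base case $n = 0$ is trivial: take $\mathcal{O}_{0} = Y$, $C_{0}^{0} = X$.

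For each $k \in \overline{0, 2^{n}-1}$ the relevant disjoint closed pair $(A_{k}, B_{k})$ in $f^{-1}\mathcal{O}_{n}$ is: $(F \cap f^{-1}\mathcal{O}_{n},\ f^{-1}\mathcal{O}_{n} \setminus \mathrm{int}(C_{n}^{0}))$ at $k = 0$; $(C_{n}^{k-1},\ f^{-1}\mathcal{O}_{n} \setminus \mathrm{int}(C_{n}^{k}))$ for $1 \leqslant k \leqslant 2^{n}-2$; and $(C_{n}^{2^{n}-2},\ T \cap f^{-1}\mathcal{O}_{n})$ at $k = 2^{n}-1$. Disjointness in each case follows from the inductive hypothesis. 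To each such pair I would apply the prenormality of $f_{\mathcal{O}_{n}}$ (a consequence of the normality of $f$) \emph{twice}, in the usual Urysohn-insertion style: a first application gives a neighborhood $\mathcal{O}_{k}' \subset \mathcal{O}_{n}$ of $y$ with disjoint open $V_{A} \supset A_{k} \cap f^{-1}\mathcal{O}_{k}'$ and $V_{B} \supset B_{k} \cap f^{-1}\mathcal{O}_{k}'$ in $f^{-1}\mathcal{O}_{k}'$; a second application, this time to the disjoint closed pair $(A_{k} \cap f^{-1}\mathcal{O}_{k}',\ f^{-1}\mathcal{O}_{k}' \setminus V_{A})$ inside $f^{-1}\mathcal{O}_{k}'$, produces a smaller neighborhood $\mathcal{O}_{k} \subset \mathcal{O}_{k}'$ and an open $V_{A}'$ whose closure in $f^{-1}\mathcal{O}_{k}$ lies in $V_{A}$, hence stays disjoint from $B_{k}$. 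Set $C^{(k)} := \mathrm{cl}_{f^{-1}\mathcal{O}_{k}}(V_{A}')$.

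Finally, take $\mathcal{O}_{n+1} := \bigcap_{k=0}^{2^{n}-1} \mathcal{O}_{k}$, still a neighborhood of $y$ in $\mathcal{O}_{n}$, and define $C_{n+1}^{2k} := C^{(k)} \cap f^{-1}\mathcal{O}_{n+1}$, $C_{n+1}^{2k+1} := C_{n}^{k} \cap f^{-1}\mathcal{O}_{n+1}$, and $U_{n+1}^{j} := C_{n+1}^{j} \setminus C_{n+1}^{j-1}$. Verification of conditions (1), (2), the consistency relation, and (a), (b) at level $n+1$ is then direct from the sandwiching properties $A_{k} \cap f^{-1}\mathcal{O}_{k} \subset \mathrm{int}(C^{(k)})$ and $C^{(k)} \cap B_{k} = \varnothing$, combined with the fact that $f^{-1}\mathcal{O}_{n+1}$ is open in $f^{-1}\mathcal{O}_{n}$, so that taking relative interiors commutes with intersecting with $f^{-1}\mathcal{O}_{n+1}$.

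The main technical obstacle is the passage from prenormality (which only guarantees separating open neighborhoods in some shrunken $\mathcal{O}y$) to the closure-type separation needed for condition (2) at level $n+1$; this is precisely what the double application of prenormality above accomplishes, relying crucially on the fact that the normality of $f$ passes to the restriction $f_{\mathcal{O}}$ for every open $\mathcal{O} \subset Y$, so that prenormality can be reapplied inside $\mathcal{O}_{k}'$. A secondary bookkeeping concern is tracking which subspace the various interiors and closures are taken in, but the openness of the descending chain $\mathcal{O}_{0} \supset \mathcal{O}_{1} \supset \ldots$ keeps this manageable.
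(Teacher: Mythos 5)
Your proposal is correct and follows essentially the same route as the paper: an inductive construction that at each level inserts one new separating set per cell of the current partition, shrinks the neighborhood of $y$ accordingly, and intersects the finitely many resulting neighborhoods; your nested closed sets $C_{n}^{k}$ are just the complements (within $C_{n}^{k}$) of the paper's open sets $\widetilde{V}^{p}$, and your double application of prenormality is exactly the ``small Urysohn Lemma'' for normal mappings (\cite[Proposition 5]{liseev2}) that the paper invokes as a black box. The only loose end is the degenerate level $n=0$, where the cases $k=0$ and $k=2^{n}-1$ coincide and the pair to separate must be taken to be $(F,\ T)$ itself --- an edge case the paper's own write-up glosses over in the same way.
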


	\begin{proof}
		Without loss of generality,  assume that $\mathcal{O} = Y$, disjoint sets $F,\ T$ are closed in $X$ and $y\in Y$. By induction we construct a consistent family of binary partitions of the mapping $f: X \to Y$ at $y$.
		
		Induction base $n=0$, $\mathcal{O}_{0} = Y$, $U_{0}^{0} = X$.
		
		Induction step. Suppose that for $i \leqslant n$ the neighborhoods $\mathcal{O}_{i}$ of the point $y$, $\mathcal{O}_{i+1} \subset \mathcal{O}_{i }$, and the families of regular $2^{i}$-partitions $\{ \{U^{j}_{i} | j\in\overline{0, 2^{i}-1}\} |\ i\in\overline{0,n}\}$  of the subspaces $f^{-1} \mathcal{O}_{i}$, $i \leqslant n$, satisfying conditions (a), (b) of lemma and condition of Definition~\ref{defCoherenceSetBinPart}, have been constructed.
		
		From the normality of the mapping $f$ and \cite[Proposition 5]{liseev2} the following hold. Firstly, for a closed in 
		$f^{-1}\mathcal{O}_{n}$ subset 
		$\mathrm{cl}_{f^{-1}\mathcal{O}_{n}} \Big( \bigcup\limits_{j=1}^{2^{n}-1} U_{n}^{j} \Big)$ 
		and its neighborhood $f^{-1}\mathcal{O}_{ n} \setminus F$ there is such a neighborhood $\mathcal{O}_{n+1}^{0} \subset \mathcal{O}_{n}$ of $y$ and an open in $f^{- 1}\mathcal{O}_{n+1}^{0}$ subset $\widetilde{V}^{0}$ such that
	\begin{equation}\label{eq1Uryslemma}
		\mathrm{cl}_{f^{-1}\mathcal{O}_{n+1}^{0}} \big(\bigcup\limits_{j=1}^{2^{n}-1 } (U_{n}^{j} \cap f^{-1}\mathcal{O}^0_{n+1})\big) \subset \widetilde{V}^{0} \subset \mathrm{ cl}_{f^{-1}\mathcal{O}_{n+1}^{0}} \widetilde{V}^{0} \subset f^{-1} \mathcal{O}_{ n+1}^{0} \setminus F.
	\end{equation}
		
		Secondly, for $p\in\overline{2, 2^{n}-1}$, a closed in $f^{-1}\mathcal{O}_{n}$ subset
		$\mathrm{cl}_{f^{-1}\mathcal{O}_{n}} \bigcup\limits_{j=p}^{2^{n}-1} U_{n}^{j }$ and its neighborhood $f^{-1}\mathcal{O}_{n} \setminus \bigcup\limits_{j=0}^{p-2} U_{n}^{j}$ there are a neighborhood $\mathcal{O}_{n+1}^{p-1} \subset \mathcal{O}_{n}$ of the point $y$, and an open in $f^{-1}\mathcal{O }_{n+1}^{p-1}$ set $\widetilde{V}^{p-1}$ such that
		\begin{equation}\label{eq2Uryslemma}
			 \begin{aligned}
		\mathrm{cl}_{f^{-1}\mathcal{O}_{n+1}^{p-1}} \big(\bigcup\limits_{j=p}^{2^{n} -1} (U_{n}^{j} \cap f^{-1}\mathcal{O}^{p-1}_{n+1})\big) \subset \widetilde{V}^{ p-1} & \subset \mathrm{cl}_{f^{-1}\mathcal{O}_{n+1}^{p-1}} \widetilde{V}^{p-1} \subset \\
			 & \subset f^{-1}\mathcal{O}_{n} \setminus \bigcup\limits_{j=0}^{p-2} (U_{n}^{j} \cap f^{- 1}\mathcal{O}_{n+1}^{p-1}).
			 \end{aligned}
		\end{equation}

Thirdly, for a closed in $f^{-1}\mathcal{O}_{n}$ subset $T \cap f^{-1}\mathcal{O}_{n}$  and its neighborhood $ U_ {n}^{2^{n}-1} $ there are a neighborhood $\mathcal{O}_{n+1}^{2^{n}-1} \subset \mathcal{O}_{n} $ of point $y$ and an open subset $\widetilde{V}^{2^{ n}-1}$ such that
	\begin{equation}
		\label{eq3Uryslemma}
		T \cap f^{-1}\mathcal{O}_{n+1}^{2^{n}-1}\ \ \subset \widetilde{V}^{2^{n}-1} \subset \ \mathrm{cl}_{f^{-1} \mathcal{O}_{n+1}^{2^{n}-1}} \widetilde{V}^{2^{n} -1} \subset U_{n}^{2^{n}-1} \cap f^{-1}\mathcal{O}_{n+1}^{2^{n}-1}.
		\end{equation}
Consider the sets $\mathcal{O}_{n+1} = \bigcap\limits_{p=0}^{2^{n}-1} \mathcal{O}_{n}^{p}$ , $V^{p} = \widetilde{V}^{p} \cap f^{-1} \mathcal{O}_{n+1}$, $p\in\overline{0, 2^{n }-1}$. For the family of sets $\{V^{p} \}_{p=0}^{2^{n}-1}$ one has:
		
\begin{itemize}
	\item[(i)] directly from the construction it follows that $V^{p} \supset V^{p+1}$, $p\in\overline{0,2^{n}-2}$;
		
	\item[(ii)] from inclusions (\ref{eq1Uryslemma}) and the fulfillment of condition (a) for a regular $2^{n}$-partition $\{ U^{k}_{n} \}_{k=0}^{2^{n}-1}$ of the subspace $f^{-1}\mathcal{O}_{n}$ it follows that
	$$F \cap \mathrm{cl}_{f^{-1}\mathcal{O}_{n+1}} V^{0}=\varnothing,\ \ \mathrm{cl}_{ f^ {-1}\mathcal{O}_{n+1}}(\bigcup\limits_{k=1}^{2^{n}-1} (U_{n}^{k}\cap f^{ -1}\mathcal{O}_{n+1})\big)\subset V^{0};$$
		
	\item[(iii)] from inclusions (\ref{eq2Uryslemma})  it follows for $p\in\overline{0, 2^{n}-3}$ that 
		$$(\bigcup\limits_{k=0}^{p} (U_{n}^{k}\cap f^{-1}\mathcal{O}_{n+1})\big)\cap \mathrm{cl}_{ f^{-1}\mathcal{O}_{n+1}} V^{p+1}=\varnothing,$$
				 $$ \mathrm{cl}_{ f^{-1}\mathcal{O}_{n+1}}(\bigcup\limits_{k=p+2}^{2^{n}-1} ( U_{n}^{k}\cap f^{-1}\mathcal{O}_{n+1})\big)\subset V^{p+1};$$
		
	\item[(iv)] from inclusions (\ref{eq3Uryslemma}) and the fulfillment of condition (b) for a regular $2^{n}$-partition\-$\{ U^{k}_{n} \} _{k=0}^{2^{n}-1}$ of the subset $f^{-1}\mathcal{O}_{n}$ it follows that
		\begin{equation*}
		(\bigcup\limits_{k=0}^{2^{n}-2} (U_{n}^{k}\cap f^{-1}\mathcal{O}_{n+1})\big)\cap\mathrm{cl}_{ f^{-1}\mathcal{O}_{n+1}} V^{2^{n}-1} = \varnothing \text{ and } T \cap f^{-1}\mathcal{O}_{n+1}\subset V^{2^{n}-1}.
		\end{equation*}
		\end{itemize}

Let $U_{n+1}^{2k}= (U_{n}^{k} \setminus V^{k})\cap f^{-1}\mathcal{O}_{n+1}$ , $ U_{n+1}^{2k+1}=(U_{n}^{k}\cap V^{k})\cap f^{-1}\mathcal{O}_{n+1 }$, $k\in\overline{0, 2^{n}-1}$. Then from (ii) -- (iv) it follows that $\{U^{k}_{n+1} |\ k\in\overline{0, 2^{n+1}-1}\}$ is a regular $2^{n+1}$-partition of the subspace $f^{-1} \mathcal{O}_{n+1}$ for which the condition of Definition~\ref{defCoherenceSetBinPart} is satisfied.

The constructed sequence of neighborhoods $\mathcal{O}_{n}$ and families of regular $2^{n}$-partitions of the subspace $f^{-1} \mathcal{O}_{n}$, $n\in\omega_0 $ are the consistent family $\{\mathcal{O}_{n}, \{U_{n}^{k}| \ k\in\overline{0, 2^{n}-1}\}|\ n\in\mathbb\omega_0\}$ of binary partitions of the mapping $f: X \to Y$ at $y$, for which the fulfillment properties (a) and (b) follows from (ii) and (iv).
		\end{proof}

		\begin{theorem} \label{thMainFuncChar}
			For a mapping $f: X \to Y$ the following conditions are equivalent.
			\begin{enumerate}
			\item[{\rm (A)}] The mapping $f$ is normal;
			\item[{\rm (B)}] For any $\mathcal{O} \in \tau_{Y}$ and any pair of disjoint closed in $f^{-1}\mathcal{O}$ subsets $F$ and $T$, for any point $y \in \mathcal{O}$ there is a consistent family of binary partitions $\{\mathcal{O}_{n}, \{U_{n}^{k}|\ k\in\overline{0, 2^{n}-1}\}|\ n\in\mathbb\omega_0\}$ of the mapping $f: X \to Y$ at $y$, such that for any $n \in \mathbb{N}$
			\begin{itemize}
			\item[{\rm (a)}] $F\cap f^{-1}\mathcal{O}_n\subset U_{n}^{0}$,\ \ $T\cap f^{-1 }\mathcal{O}_n\subset U_{n}^{2^{n}-1}$\rm;
			\item[{\rm (b)}] $F\cap \mathrm{cl}_{ f^{-1}\mathcal{O}_n}(\bigcup\limits_{k=1}^{2^{ n}-1} U_{n}^{k})=\varnothing,\ \ \mathrm{cl}_{ f^{-1}\mathcal{O}_n}(\bigcup\limits_{k=0} ^{2^{n}-2} U_{n}^{k}) \cap T = \varnothing$.
			\end{itemize}
			\item[{\rm (C)}]
			For any $\mathcal{O}\in\tau_{Y}$ and any pair of disjoint closed in $f^{-1}\mathcal{O}$ subsets $F$ and $T$, for any point $y \in \mathcal{O}$ there exist a $f$-continuous at $y$ function $\varphi: X \to [0,1]$ and a neighborhood $\mathcal{O}y$ of $y$ such that $\mathrm{osc}_{ \varphi}(f^{-1}\mathcal{O}y)<\frac{1}{2}$ and
			
			\begin{center}
			$F\cap f^{-1}\mathcal{O}y\subset\varphi^{-1}(0)\cap f^{-1}\mathcal{O}y$,
			$T\cap f^{-1}\mathcal{O}y\subset\varphi^{-1}(1)\cap f^{-1}\mathcal{O}y$;
			
			\medskip
			
			$F\cap f^{-1}\mathcal{O}y\subset f^{-1}\mathcal{O}y\setminus \mathrm{cl}_{f^{-1}\mathcal{O} y} (\varphi^{-1}[\frac12, 1]\cap f^{-1}\mathcal{O}y)$,
			$T\cap f^{-1}\mathcal{O}y\subset\mathrm{int}_{f^{-1}\mathcal{O}y}(\varphi^{-1}[\frac12, 1]\cap f^{-1}\mathcal{O}y)$.
			\end{center}
			
			\item[{\rm (D)}] Let $\mathcal{O} \in \tau_{Y}$, $F$ is a non-empty closed subset of $f^{-1}\mathcal{O}$ and $y \in \mathcal{O}$ is arbitrary. Then, for any $\widetilde{f}$-continuous at $y$ function $\widetilde{\varphi}: F \to \mathbb{R}$ of the mapping $\widetilde{f}: F \to \mathcal {O}$, {\rm(}$\widetilde{f}(x) = f(x)$ for $x \in F${\rm)} there is a $f$-continuous at $y$ function $\varphi: X\to\mathbb{R}$  for the mapping $f: X \to Y$ such that
			\begin{itemize}
			\item[{\rm (a)}]$\widetilde\varphi|_{F\cap f^{-1}G}=\varphi|_{F\cap f^{-1}G}$, where $ G $ is a $ G_{\delta}$--subset of $\mathcal{O}$ and $y \in G$ \\
			{\rm(}in particular, $\widetilde\varphi|_{F\cap f^{-1}y}=\varphi|_{F\cap f^{-1}y}${\rm)} ,
			
			\item[{\rm (b)}]$||\varphi||\leqslant||\widetilde{\varphi}||$,
			
			\item[{\rm (c)}] for any $\varepsilon > 0$ there is a neighborhood $\mathcal{O}(\varepsilon) \subset \mathcal{O}$ of $y$ such that \\ $| |\widetilde{\varphi}|_{F\cap f^{-1}\mathcal{O}(\varepsilon)} - \varphi|_{F\cap f^{-1}\mathcal{O}( \varepsilon)}|| <\varepsilon$.
			\end{itemize}
			\end{enumerate}
			\end{theorem}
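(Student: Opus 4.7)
The plan is to prove the cycle (A)$\Rightarrow$(B)$\Rightarrow$(C)$\Rightarrow$(A) and then to close the loop with (A)$\Rightarrow$(D)$\Rightarrow$(A). The implication (A)$\Rightarrow$(B) is just Lemma~\ref{lemFSetPartitionofNormalMap} restated, so nothing needs to be done. For (B)$\Rightarrow$(C), I would feed the consistent family of binary partitions delivered by (B) directly into Proposition~\ref{prFContMapOnCorPart}, obtaining an $f$-continuous at $y$ function $\varphi:X\to[0,1]$ with $\mathrm{osc}_{\varphi}(f^{-1}\mathcal{O}_n)\leqslant 1/(2^n-1)$. Setting $\mathcal{O}y=\mathcal{O}_2$ forces the oscillation below $1/3<1/2$. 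Clause (a) of (B) forces the level function $\varphi_n$ of Proposition~\ref{prFContMapOnCorPart} to vanish on $F\cap f^{-1}\mathcal{O}_n$ and to equal $1$ on $T\cap f^{-1}\mathcal{O}_n$ for every $n$, so the piecewise definition of $\varphi$ together with the uniform convergence on $\bigcap_n f^{-1}\mathcal{O}_n$ yields $\varphi\equiv 0$ on $F\cap f^{-1}\mathcal{O}y$ and $\varphi\equiv 1$ on $T\cap f^{-1}\mathcal{O}y$. The two closure/interior inclusions of (C) then drop out of Lemma~\ref{lemOscDisj} applied inside $f^{-1}\mathcal{O}y$ with the pairs $a=0,b=\tfrac12$ and $a=\tfrac12,b=1$.

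For (C)$\Rightarrow$(A), the sets $U=f^{-1}\mathcal{O}y\setminus\mathrm{cl}_{f^{-1}\mathcal{O}y}(\varphi^{-1}[\tfrac12,1]\cap f^{-1}\mathcal{O}y)$ and $V=\mathrm{int}_{f^{-1}\mathcal{O}y}(\varphi^{-1}[\tfrac12,1]\cap f^{-1}\mathcal{O}y)$ are disjoint open subsets of $f^{-1}\mathcal{O}y$ containing $F\cap f^{-1}\mathcal{O}y$ and $T\cap f^{-1}\mathcal{O}y$ respectively, by the last two inclusions of (C); this witnesses $f$-separation of $F$ and $T$ on $\mathcal{O}$ and hence normality of $f$.

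The substantial work is (A)$\Rightarrow$(D), the Brouwer-Tietze-Urysohn extension for mappings. I would mimic the classical Tietze iteration, modified to cope with the fact that $\widetilde\varphi$ is only $\widetilde f$-continuous at $y$. Assume $||\widetilde\varphi||\leqslant 1$. Inductively, pick a neighborhood $\mathcal{O}_n\subset\mathcal{O}_{n-1}$ of $y$ so small that the residue $\psi_n=\widetilde\varphi-\sum_{k<n}g_k|_F$ satisfies $\mathrm{osc}_{\psi_n}(\widetilde f^{-1}\mathcal{O}_n)<\tfrac23(2/3)^{n-1}$. Lemma~\ref{lemOscDisj} then makes the sublevel sets $\psi_n^{-1}[-(2/3)^{n-1},-\tfrac13(2/3)^{n-1}]$ and $\psi_n^{-1}[\tfrac13(2/3)^{n-1},(2/3)^{n-1}]$ have disjoint closures inside $\widetilde f^{-1}\mathcal{O}_n$; since $F\cap f^{-1}\mathcal{O}_n$ is closed in $f^{-1}\mathcal{O}_n$, these closures are closed in $f^{-1}\mathcal{O}_n$ as well. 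The already-proven equivalence (A)$\Leftrightarrow$(C), applied with $\mathcal{O}=\mathcal{O}_n$, provides an $f$-continuous at $y$ function $g_n:X\to[-\tfrac13(2/3)^{n-1},\tfrac13(2/3)^{n-1}]$ separating them on a further shrunken neighborhood, so that $||\psi_{n+1}||_{F\cap f^{-1}\mathcal{O}_{n+1}}\leqslant (2/3)^n$. Since $\sum_n||g_n||\leqslant 1$, Lemma~\ref{lemSumFcontMapFcont} assembles $\varphi=\sum_n g_n$ into an $f$-continuous at $y$ function with $||\varphi||\leqslant 1=||\widetilde\varphi||$, giving (b); the shrinking neighborhoods together with the geometric residue estimates give (c); and equality on the $G_\delta$-set $G=\bigcap_n\mathcal{O}_n$ gives (a).

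Finally, for (D)$\Rightarrow$(A), I would apply (D) to the two-valued function $\widetilde\varphi$ equal to $0$ on $F$ and $1$ on $T$ (continuous on the closed set $F\cup T\subset f^{-1}\mathcal{O}$, hence $\widetilde f$-continuous at every point of $\mathcal{O}$), obtaining an $f$-continuous at $y$ extension $\varphi:X\to\mathbb{R}$. Clause (c) of (D) produces a neighborhood $\mathcal{O}y$ on which $\varphi<1/3$ on $F\cap f^{-1}\mathcal{O}y$ and $\varphi>2/3$ on $T\cap f^{-1}\mathcal{O}y$, and a further shrink of $\mathcal{O}y$ using the $f$-continuity of $\varphi$ drives $\mathrm{osc}_\varphi(f^{-1}\mathcal{O}y)$ below $1/3$, after which the argument of (C)$\Rightarrow$(A) builds the separating open sets. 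The principal obstacle I expect is the bookkeeping in (A)$\Rightarrow$(D): one must coordinate the nested neighborhoods $\mathcal{O}_n$, the geometric norm bounds on $g_n$, and the oscillation estimates so that each $g_n$ is genuinely $f$-continuous at $y$ on all of $X$ (not merely on $F$), Lemma~\ref{lemSumFcontMapFcont} applies, and the limit coincides with $\widetilde\varphi$ precisely on the $G_\delta$-set required by clause~(a). The technical heart of the argument is the trick of replacing the non-closed sublevel sets of $\widetilde\varphi$ by their closures on a small preimage via Lemma~\ref{lemOscDisj} --- this is exactly what lets the classical Tietze iteration survive under the weaker hypothesis of mere $\widetilde f$-continuity.
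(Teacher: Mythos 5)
Your proposal follows essentially the same route as the paper: (A)$\Rightarrow$(B) is Lemma~\ref{lemFSetPartitionofNormalMap}, (B)$\Rightarrow$(C) is Proposition~\ref{prFContMapOnCorPart} combined with Lemma~\ref{lemOscDisj}, your (D)$\Rightarrow$(A) matches the paper's, and your (A)$\Rightarrow$(D) is in substance identical to the paper's (C)$\Rightarrow$(D) --- the paper likewise runs a Tietze iteration in which condition (C), rescaled to the interval $[-\mu_n/3,\mu_n/3]$, supplies each corrector $\psi_n$. The only structural difference is that you close the loop twice ((C)$\Rightarrow$(A) and then (A)$\Rightarrow$(D)$\Rightarrow$(A)) where the paper runs the single cycle (A)$\Rightarrow$(B)$\Rightarrow$(C)$\Rightarrow$(D)$\Rightarrow$(A); this is cosmetic.

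One step of your iteration needs tightening. You require only $\mathrm{osc}_{\psi_n}(\widetilde f^{-1}\mathcal{O}_n)<\tfrac23\big(\tfrac23\big)^{n-1}$, which is exactly $b-a$ for your cut points $a=-\tfrac13\big(\tfrac23\big)^{n-1}$, $b=\tfrac13\big(\tfrac23\big)^{n-1}$, and then assert that Lemma~\ref{lemOscDisj} makes the two sublevel sets have \emph{disjoint closures}. The lemma under $\mathrm{osc}<b-a$ yields only the one-sided statements $A\cap\mathrm{cl}\,B=\varnothing$ and $\mathrm{cl}\,A\cap B=\varnothing$; the closures themselves can still meet (on $\mathbb{R}$ take $\psi(x)=a$ for $x<0$, $\psi(0)=\tfrac12(a+b)$, $\psi(x)=b$ for $x>0$: the oscillation is $\tfrac12(b-a)<b-a$, yet both closures contain $0$). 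Since (C) must be fed a pair of \emph{disjoint closed} sets, you genuinely need $\mathrm{cl}\,A\cap\mathrm{cl}\,B=\varnothing$, for which $\mathrm{osc}_{\psi_n}<\tfrac12(b-a)=\tfrac13\big(\tfrac23\big)^{n-1}$ suffices (a common point of the two closures would have oscillation at least $\tfrac12(b-a)$); this is precisely the bound the paper imposes ($\mathrm{osc}<\mu_n/3$ against cut points $\pm\mu_n/3$). The repair is free, since $\psi_n$ is $\widetilde f$-continuous at $y$ and $\mathcal{O}_n$ may be shrunk at will. A similar caution applies to the $\tfrac13$/$\tfrac23$ numerology in your (D)$\Rightarrow$(A): the threshold $t$ defining the separating sets $f^{-1}\mathcal{O}y\setminus\mathrm{cl}(\varphi^{-1}[t,1]\cap f^{-1}\mathcal{O}y)$ and $\mathrm{int}(\varphi^{-1}[t,1]\cap f^{-1}\mathcal{O}y)$ must lie at distance greater than $\mathrm{osc}_{\varphi}(f^{-1}\mathcal{O}y)$ from both approximate values taken on $F$ and on $T$, so the oscillation should be driven somewhat below the $\varepsilon$ used in clause (c), not merely below $\tfrac13$.
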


\begin{proof}
\textbf{(A) $\Rightarrow $ (B)} by Lemma~\ref{lemFSetPartitionofNormalMap}.

\textbf{(B) $\Rightarrow $ (C).}
By (B) for a pair of disjoint closed in $f^{-1}\mathcal{O}$ subsets $F$ and $T$ there exists a consistent family of binary partitions $\{ \mathcal{O}_{n}, \{U^{k}_{n} |\ k\in\overline{0,2^{n}-1} \}|\ n \in \omega_{0} \}$ of the mapping $f:X\to Y$ at $y$ for which conditions (a) and (b) hold.

In Proposition~\ref{prFContMapOnCorPart} the $f$-continuous at $y\in Y$ function 
\begin{equation*}
\varphi (x) = \left\{ \begin{array}{ll}
\varphi_{n}(x), & x \in f^{-1}\mathcal{O}_{n} \setminus f^{-1}\mathcal{O}_{n+1},\ n \in\mathbb\omega_0,\\
\lim\limits_{n \to \infty}\varphi_{n}(x), & x\in\bigcap\limits_{n=0}^{\infty} f^{-1}\mathcal{O}_ {n}. \\
\end{array}\right.
\end{equation*}
according to a consistent family of binary partitions $\{ \mathcal{O}_{n}, \{ U^{k}_{n} | \ k\in\overline{0,2^{n}-1} \}|\ n \in \omega_{0} \}$ of the mapping $f:X\to Y$ at $y$ was constructed, where $\varphi_{n}: X \to \mathbb{R}$ are the following
\begin{equation*}
\varphi_{n} (x) = \left\{ \begin{array}{ll}
0, & x \in X\setminus f^{-1} \mathcal{O}_{n}, \\
\frac{k}{2^{n}-1}, & x \in U^{k}_{n}, \ k\in\overline{0, 2^{n}-1},\ n \in \omega_{0}.\\
\end{array}\right.
\end{equation*}

In this case, firstly, $\varphi (x) = 0$ for $x \in F$ (since $F \subset U_{n}^{0}$, $n \in \mathbb{N} $), $\varphi (x) = 1$ for $x \in T$ (since $T \subset U_{n}^{2^{n}-1}$, $ n \in \mathbb{N}$).

Secondly, due to the $f$-continuity of $\varphi$ there is a neighborhood $\mathcal{O}y$ of $y$ such that $\mathrm{osc}_{\varphi}(f^{-1} \mathcal{O}y)<\frac{1}{2}$.
Then by Lemma \ref{lemOscDisj}
\begin{equation*}
F \cap f^{-1}\mathcal{O}y\subset \varphi^{-1}(0)\cap f^{-1}\mathcal{O}y\subset f^{-1} \mathcal{O}y \setminus \mathrm{cl}_{ f^{-1}\mathcal{O}y}(\varphi^{-1}[\tfrac{1}{2}, 1]\cap f ^{-1}\mathcal{O}y).
\end{equation*}

Thirdly, for any $n\in \mathbb{N}$
\begin{equation*}
\varphi_n^{-1}[\tfrac{1}{2}, 1]=\bigcup\limits_{k=2^{n-1}}^{2^{n}-1} U_{n}^ {k}=U^{1}_{1}\cap f^{-1}\mathcal{O}_n.
\end{equation*}
Hence,
\begin{equation*}
T\cap f^{-1}\mathcal{O}y\subset U^{1}_{1} \cap f^{-1}\mathcal{O}y\subset\mathrm{int}_{ f ^{-1}\mathcal{O}y} (\varphi^{-1}[\tfrac{1}{2}, 1]\cap f^{-1}\mathcal{O}y).
\end{equation*}

\textbf{(C) $\Rightarrow$ (D).} If $Y\ne\mathcal{O}$ and the $f_{\mathcal{O}}$-continuous at  $y$ function $\varphi : f^{-1}\mathcal{O}\to \mathbb{R}$ (for a restriction $f_{\mathcal{O}}: f^{-1}\mathcal{O}\to \mathcal{O}$ of the mapping $f: X \to Y$) which satisfies conditions  similar to the conditions of (D) is constructed, then defining, additionally, the function $\varphi$ by the value 0 at the points $X\setminus f^{-1}\mathcal{O}$ we obtain the required function. Therefore, one can assume that $Y=\mathcal{O}$, the subset $F$ is closed in $X$ and $y \in Y$.

If $y\not\in\mathrm{cl} \big( f(X) \big)$, then the constant function $\varphi$ taking the value 0 on $X$ is the required one.
 
Otherwise, put $\varphi_{0} = \widetilde{\varphi}$, $\mu_{0} = ||\varphi_{0}||$. If $||\varphi_{0}|| = 0$, then put $\varphi \equiv 0$.

Otherwise, from the $\widetilde{f}$-continuity of the function $\widetilde{\varphi}$ at $y$, let $\mathcal{O}'_{0}$ be a neighborhood of $y$ such that $ \mathrm{osc}_{\widetilde{\varphi}}(F\cap f^{-1}\mathcal{O}'_0)<\frac{\mu_0}{3}$,
\begin{equation*}
\begin{split}
P_{0} = \text{cl}_{ f^{-1}\mathcal{O}'_{0}}\{x \in F \cap f^{-1}\mathcal{O}'_ {0}\ |\ \varphi_{0}(x) \leqslant - \tfrac{\mu_{0}}{3}\}, \\
Q_{0} = \text{cl}_{f^{-1}\mathcal{O}'_{0}}\{ x \in F \cap f^{-1}\mathcal{O}'_ {0}\ |\ \varphi_{0}(x) \geqslant \tfrac{\mu_{0}}{3}\}.
\end{split}
\end{equation*}
Then, by Lemma~\ref{lemOscDisj}, the closed in $f^{-1}\mathcal{O}'_{0}$ subsets $P_{0}$ and $Q_{0}$ are disjoint. By (C) (replacing the segment $[0, 1]$ with $[-\frac{\mu_{0}}{3}, \frac{\mu_0}{3}]$), there exist a $f$-continuous at  $y$ function $\psi_{0}: X \to [-\frac{\mu_{0}}{3}, \frac{\mu_{0}}{3}]$
and a neighborhood $\mathcal{O}_{0} \subset \mathcal{O}'_{0}$ of the point $y$ such that
\begin{equation*}
	\begin{split}
		P_{0} \cap f^{-1}\mathcal{O}_{0} \subset\psi^{-1}_{0}(-\tfrac{\mu_{0}}{3}),\ Q_{0}\cap f^{-1}\mathcal{O}_{0} \subset \psi^{-1}_0(\tfrac{\mu_{0}}{3}), \\
P_{0} \cap f^{-1}\mathcal{O}_{0} \subset f^{-1}\mathcal{O}_{0} \setminus \mathrm{cl}_{ f^{-1}\mathcal{O}_{0}}(\psi_0^{-1}([0, \tfrac{\mu_{0}}{3}]) \cap f^{-1}\mathcal{O}_{0}), \\
Q_{0} \cap f^{-1}\mathcal{O}_{0} \subset \mathrm{int}_{ f^{-1}\mathcal{O}_{0}}(\psi_{0}^{-1}([0, \tfrac{\mu_{0}}{3}]) \cap f^{-1}\mathcal{O}_{0}), \\
\mathrm{osc}_{\psi_{0}}(F\cap f^{-1}\mathcal{O}_{0}) < \tfrac{\mu_{0}}{3}.
	\end{split}
\end{equation*}
Let $\varphi_{1}=\varphi_{0} - \psi_{0}: F\cap f^{-1}\mathcal{O}_{0} \to \mathbb{R}$. By Remark~\ref{remFcontMap} the function $\varphi_{1}$ is $f|_{F \cap f^{-1}\mathcal{O}_{0}}$--continuous at $y$ and $||\varphi_{1}|| = \mu_{1} \leqslant \tfrac{2\mu_{0}}{3}$.
 
By induction we construct a non-increasing (with respect to embedding) sequence of neighborhoods $\{\mathcal{O}_{n}\}_{n=0}^{\infty}$ of $y$, a sequence of $f$-continuous at $y$ functions $\psi_{n}: X\to \mathbb{R}$ and a $\widetilde{f}$-continuous at $y$ functions $\varphi_{n}: F\cap f^{-1 }\mathcal{O}_{n} \to \mathbb{R}$ such that $\varphi_{n+1} = \varphi_{n}-\psi_{n}$, $||\psi_{n }|| \leqslant \frac{\mu_{n}}{3}$, $||\varphi_{n+1}|| = \mu_{n+1} \leqslant \frac{2\mu_{n}}{3}$.
\begin{equation}
||\varphi_{n}|| \leqslant \bigg(\frac{2}{3} \bigg)^{n}\mu_{0},\ ||\psi_{n}||\leqslant \bigg(\frac{2}{3} \bigg)^{n} \frac{\mu_{0}}{3}.\
\label{eqno(2.4)}
\end{equation}
 
Let
\begin{equation} \label{eqno(2.4.1)}
\varphi(x) = \sum\limits_{k=0}^{\infty}\psi_{n}(x).
\end{equation}
 
From the conditions (\ref{eqno(2.4)}) it follows that
the series $\sum\limits_{k=0}^{\infty}||\psi_{n}||$ converges. By Lemma
\ref{lemSumFcontMapFcont} the function $\varphi: X\to \mathbb{R}$ is $f$-continuous at $y$.
The inequality $||\varphi|| \leqslant ||\widetilde{\varphi}||$ follows from the second inequality of (\ref{eqno(2.4)}). Indeed, from (\ref{eqno(2.4.1)}) it follows that
\begin{equation*}
||\varphi|| \leq \sum\limits_{k=0}^{\infty}||\psi_{n}|| \leqslant \frac{\mu_{0}}{3} \sum\limits_{k=0}^{\infty} \bigg(\frac{2}{3} \bigg)^{k} = ||\widetilde{\varphi}||.
\end{equation*}
From the equality (\ref{eqno(2.4.1)}), equation $\varphi_{n+1}=\varphi_{n}-\psi_{n} = \widetilde{\varphi} - \sum\limits_ {k=0}^{n}\psi_{n}$
and the first inequality of (\ref{eqno(2.4)}) one has 
$\widetilde{\varphi}|_{F \cap \bigcap\limits_{n=0}^{\infty} f^{-1}\mathcal{O}_{n}} = \varphi|_{F \cap \bigcap\limits_{n=0}^{\infty}f^{-1}\mathcal{O}_{n}}$ (in particular, $\widetilde{\varphi}|_{F \cap f^{-1}y} = \varphi|_{F \cap f^{-1}y}$) and the last condition of item (D) holds.

\textbf{(D) $\Rightarrow$ (A).} Let $\mathcal{O}\in\tau_Y$, $F$ and $T$ are closed disjoint subsets of $f^{-1}\mathcal{O}$, $y\in \mathcal{O}$.

The function
\begin{equation} \label{eqBTU1}
\widetilde{\varphi} (x)=\left\{\begin{array}{cl}
0, & x \in F, \\
1, & x \in T \\
\end{array}\right.
\end{equation}
is $\widetilde{f}$-continuous at $y$ for the restriction $\widetilde{f}: F\cup T\to {\mathcal{O}}$ of the mapping $f: X \to Y$. The set $F\cup T$ is closed in $f^{-1}\mathcal{O}$.

By condition (D) there is a
$f$-continuous at $y$ function $\varphi: X\to [0, 1]$ for the mapping $f: X \to Y$ such that $\widetilde\varphi|_{(F \cup T) \cap f^{-1}G}=\varphi|_{(F\cup T)\cap f^{-1}G}$, where $ G $ is a $G_{\delta}$-subset of $\mathcal{O}$ and $y \in G$ (in particular $\widetilde\varphi|_{(F \cup T)\cap f^{-1}y}=\varphi|_{ (F \cup T)\cap f^{-1}y}$), $||\varphi||\leqslant||\widetilde{\varphi}||$, and for $\varepsilon=\frac{1} {4}$ there is a neighborhood $\mathcal{O}'\subset \mathcal{O}$ of $y$ such that $||\widetilde{\varphi}|_{(F \cup T)\cap f^ {-1}\mathcal{O}'} - \varphi|_{(F \cup T)\cap f^{-1}\mathcal{O}'}|| <\frac{1}{4}$ .

Since the function $\varphi: X\to\mathbb{R}$ is $f$-continuous at $y$ for the mapping $f: X \to Y$, then there exists a neighborhood $\mathcal{O}y\subset \mathcal{O}'$ such that
$$\mathrm{osc}_{\varphi}(f^{-1}\mathcal{O}y) < \frac{1}{4}.$$
Then, by Lemma~\ref{lemOscDisj} we have
$\big(\varphi^{-1}[0, \frac{1}{4}] \cap f^{-1}\mathcal{O}y \big) \cap \big( \mathrm{cl} _{f^{-1}\mathcal{O}y} ( \varphi^{-1}[\frac{3}{4}, 1] \cap f^{-1}\mathcal{O}y) \big) = \varnothing$. Open sets $V = \mathrm{int}_{f^{-1}\mathcal{O}y} \big(\varphi^{-1}[0, \frac{1}{4}] \cap f^ {-1}\mathcal{O}y\big)$ and $ U = \mathrm{int}_{f^{-1}\mathcal{O}y} \big(\varphi^{-1}[\frac{3}{4}, 1] \cap f^{-1}\mathcal{O}y\big)$ are disjoint. From (\ref{eqBTU1}) it follows that $F\cap f^{-1}\mathcal{O}y \subset V$, $T\cap f^{-1}\mathcal{O}y\subset U$. 
Thus, the mapping $f: X \to Y$ is normal.
\end{proof}

\begin{remark}\label{remUryshonlemma}
	{\rm 1.} Implication {\rm (A) $\Rightarrow $ (C)} of Theorem~{\rm\ref{thMainFuncChar}} is the Urysohn's Lemma for mappings, which is formulated in~\cite{Zubov}.
	
	{\rm 2.} Statement {\rm (D)} of Theorem~{\rm\ref{thMainFuncChar}} is a variant of extension of the Brouwer-Tietze-Urysohn Theorem to mappings.
	 
	{\rm 3.} The extension of a $f|_{F}$-continuous at $y$ function, is equivalent to the extension of a $f|_{F}$-continuous at $y$ bounded mapping into a Banach space.
	
	{\rm 4.} In the case of a constant mapping $f: X \to \{y\}$ the statement of Theorem~{\rm\ref{thMainFuncChar}} coincides with the statement of Urysohn's Lemma~\cite[p. 75, Theorem 1.5.10]{Engelking} and the Brouwer--Tietze--Urysohn Theorem\cite[p. 116, Theorem 2.1.8]{Engelking} for spaces.
\end{remark}


\section{Family of functions $f$-equicontinuous at a point and $\sigma$-normality of a mapping}

\begin{definition}\label{defEquicontMapSeq}
For a mapping $f: X \to Y$, a family of functions $\{\varphi_{n}: X \to [0, 1]\}_{n \in \mathbb{N}}$ is called a {\it $f$-equicontinuous family of functions at a point $y\in Y$} if for any $\varepsilon > 0$ there is a neighborhood $\mathcal{O}y$ of $y$ such that for any $n \in \mathbb{N}$
\begin{equation*}
 \mathrm{osc}_{\varphi_{n}}(f^{-1}\mathcal{O}y) < \varepsilon.
\end{equation*}
\end{definition}

\begin{remark}
	Each function $\varphi_{n}$ of a $f$--equicontinuous family $\{\varphi_{n}\}_{n \in \mathbb{N}}$ of functions at $y$ is $f$--continuous at $y$, the converse is not true.
\end{remark}
	
\begin{definition} \cite[Definition 7]{liseev3}.
	\label{defSigmNorm}
	A mapping $f: X \rightarrow Y$ is said to be a {\it $\sigma$--prenormal} if for any $F_{\sigma}$--set $T=\bigcup\limits_{l=1}^ {\infty} T_{l}$, where the subset $T_{l}$ is closed in $X$, $l \in \mathbb{N}$, and a closed in $X$ subset $F$ such that $ T \cap F = \varnothing$, for any point $y \in Y$ there are its neighborhood $\mathcal{O}y$ and a family $\{\mathcal{O}_{l}\}_{l=1}^{\infty}$ of open in $f^{-1}\mathcal{O}y$ sets such that $T_{l} \cap f^{-1}\mathcal{O}y \subset \mathcal{O}_{l}$, $l \in \mathbb{N}$, and	$\Big(\bigcup\limits_{l=1}^{\infty} \mathrm{cl}_{f^{-1}\mathcal{O}y}(\mathcal{O}_{l}) \Big) \cap F = \varnothing $.
	
	A mapping $f$ is called a {\it $\sigma$--normal} if the restriction $f_{\mathcal{O}}: f^{-1} \mathcal{O} \to \mathcal{O} $ of $f$ to $ \mathcal{O}$ is $\sigma$--prenormal for any $\mathcal{O} \in \tau_{Y}$.
\end{definition}
	
It should be noted that in the case of a constant mapping its prenormality, normality, $\sigma$--prenormality and $\sigma$--normality are equivalent. However, unlike normality, $\sigma$--normality of the mapping is inherited by $F_{\sigma}$--submappings \cite[Theorem 6]{liseev3}.

\begin{lemma}
	A mapping $f: X \to Y$ $\sigma$ is normal iff for any $\mathcal{O} \in \tau_{Y}$,
	any point $y \in \mathcal{O}$ and any $F_{\sigma}$-subset $T = \bigcup\limits_{l=1}^{\infty} T_{l}$ in $f^{ -1}\mathcal{O}$, and its neighborhood $U \subset f^{-1}\mathcal{O}$, there is a neighborhood $\mathcal{O}y \subset \mathcal{O}$ of the point $y$ such that in $f^{-1}\mathcal{O}y $ there are neighborhoods $V_{l}$ of the sets $T_{l} \cap f^{-1}\mathcal{O}y$, $ l \in \mathbb{N}$, and 
	\begin{equation*}
	T_{l} \cap f^{-1}\mathcal{O}y \subset V_{l} \subset {\rm cl}_{f^{-1}\mathcal{O}y}V_{l} \subset U \cap f^{-1}\mathcal{O}y, \ l \in \mathbb{N}.
	\end{equation*}
	\label{lowUryslemmaSigmaNorm}
\end{lemma}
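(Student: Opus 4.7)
The plan is to observe that the condition stated in the lemma and $\sigma$-prenormality of Definition~\ref{defSigmNorm} are just reformulations of each other via the complementary correspondence between the closed set $F$ and the open neighborhood $U=f^{-1}\mathcal{O}\setminus F$ of $T$. This is the same duality that underlies the ``small'' form of the classical Urysohn lemma.

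For the direction $\sigma$-normality $\Rightarrow$ stated condition, I would fix $\mathcal{O}\in\tau_Y$, $y\in\mathcal{O}$, an $F_\sigma$-subset $T=\bigcup_l T_l$ of $f^{-1}\mathcal{O}$ and an open neighborhood $U\subset f^{-1}\mathcal{O}$ of $T$, and set $F=f^{-1}\mathcal{O}\setminus U$. Then $F$ is closed in $f^{-1}\mathcal{O}$ and $F\cap T=\varnothing$, so applying the $\sigma$-prenormality of the restriction $f_{\mathcal{O}}:f^{-1}\mathcal{O}\to\mathcal{O}$ at $y$ produces a neighborhood $\mathcal{O}y\subset\mathcal{O}$ and a family $\{\mathcal{O}_l\}$ of open in $f^{-1}\mathcal{O}y$ sets with $T_l\cap f^{-1}\mathcal{O}y\subset\mathcal{O}_l$ and $\bigl(\bigcup_l\mathrm{cl}_{f^{-1}\mathcal{O}y}\mathcal{O}_l\bigr)\cap F=\varnothing$. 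Setting $V_l=\mathcal{O}_l$, the disjointness from $F$ rewrites (term by term) as
\begin{equation*}
\mathrm{cl}_{f^{-1}\mathcal{O}y}V_l\subset f^{-1}\mathcal{O}y\setminus F=U\cap f^{-1}\mathcal{O}y,\qquad l\in\mathbb{N},
\end{equation*}
which is exactly the required chain of inclusions.

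For the reverse direction, I would take $\mathcal{O}\in\tau_Y$ and verify that $f_\mathcal{O}$ is $\sigma$-prenormal. Given a closed subset $F$ of $f^{-1}\mathcal{O}$, an $F_\sigma$-subset $T=\bigcup_l T_l$ with $T\cap F=\varnothing$, and $y\in\mathcal{O}$, put $U=f^{-1}\mathcal{O}\setminus F$; this is an open neighborhood of $T$ in $f^{-1}\mathcal{O}$. The hypothesis then yields a neighborhood $\mathcal{O}y\subset\mathcal{O}$ and, in $f^{-1}\mathcal{O}y$, neighborhoods $V_l$ of $T_l\cap f^{-1}\mathcal{O}y$ with $\mathrm{cl}_{f^{-1}\mathcal{O}y}V_l\subset U\cap f^{-1}\mathcal{O}y$, equivalently $\mathrm{cl}_{f^{-1}\mathcal{O}y}V_l\cap F=\varnothing$. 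Taking the union over $l$ gives $\bigl(\bigcup_l\mathrm{cl}_{f^{-1}\mathcal{O}y}V_l\bigr)\cap F=\varnothing$, which is precisely the condition of Definition~\ref{defSigmNorm} for $f_\mathcal{O}$; since $\mathcal{O}$ is arbitrary, $f$ is $\sigma$-normal.

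There is no real obstacle here beyond careful bookkeeping: one must keep track of where sets are closed/open (subsets closed in $f^{-1}\mathcal{O}$ remain closed when intersected with $f^{-1}\mathcal{O}y\subset f^{-1}\mathcal{O}$, so the hypotheses of Definition~\ref{defSigmNorm} apply verbatim to the restriction), and one must note that $\bigl(\bigcup_l\mathrm{cl}\,V_l\bigr)\cap F=\varnothing$ is equivalent to the termwise statement $\mathrm{cl}\,V_l\cap F=\varnothing$ for each $l$, so the two formulations match exactly. The lemma therefore amounts to a convenient restatement replacing ``the closures avoid $F$'' by ``the closures lie inside $U$''.
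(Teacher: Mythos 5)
Your proposal is correct and follows essentially the same route as the paper: both directions pass between the closed set $F$ and its complementary open neighborhood $U=f^{-1}\mathcal{O}\setminus F$ of $T$, apply the definition of $\sigma$-prenormality to the restriction $f_{\mathcal{O}}$, and use the elementary equivalence between $\bigl(\bigcup_{l}\mathrm{cl}_{f^{-1}\mathcal{O}y}V_{l}\bigr)\cap F=\varnothing$ and the termwise inclusions $\mathrm{cl}_{f^{-1}\mathcal{O}y}V_{l}\subset U\cap f^{-1}\mathcal{O}y$. The bookkeeping points you flag (relativizing closedness to $f^{-1}\mathcal{O}y$) are exactly the ones the paper's proof relies on.
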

	
\begin{proof}
	{\it Necessity. } Let us fix an arbitrary subset $\mathcal{O} \in \tau_{Y}$ and a point $y \in \mathcal{O}$. For closed in $f^{-1} \mathcal{O}$ sets $T_{l}$, $l \in \mathbb{N}$, and their neighborhood $U$ consider the set $f^{-1}\mathcal{O}\setminus U$. It is closed in $f^{-1}\mathcal{O}$ and $T \cap (f^{-1}\mathcal{O}\setminus U) = \varnothing$.
	Since the mapping $f$ is $\sigma$-normal, then for $y$ there are a neighborhood $\mathcal{O}y \subset \mathcal{O}$ and neighborhoods $V_{l} \subset f^{- 1} \mathcal{O}y $ of subsets $T_{l} \cap f^{-1} \mathcal{O}y$ such that
	\begin{equation*}
  \bigcup\limits_{l=1}^{\infty} \mathrm{cl}_{f^{-1} \mathcal{O}y}(V_{l}) \cap (f^{-1} \mathcal{O} y \setminus U) = \varnothing, \ l \in \mathbb{N}.
	\end{equation*}
	Thus, for each $l \in \mathbb{N}$ we have $T_{l} \cap f^{-1}\mathcal{O}y \subset V_{l} \subset {\rm cl}_{ f^{-1}\mathcal{O}y}V_{l} \subset U \cap f^{-1}\mathcal{O}y$.
  
	{\it Sufficiency.} Let $\mathcal{O} \in \tau_{Y}$, $y \in \mathcal{O}$  an arbitrary fixed point  and consider
	two disjoint subsets $F$ and $T$ of the set $f^{-1}\mathcal{O}$ such that
   
  $T=\bigcup\limits_{l=1}^{\infty} T_{l}$ is $F_{\sigma}$--subset, and the subsets $T_{l}$ are closed in $f^{ -1}\mathcal{O}$, $l \in \mathbb{N}$,
  
  $F$ is closed in $f^{-1}\mathcal{O}$.
  
By the assumption of the theorem, for the point $y$, subsets $T_{l}$, $l \in \mathbb{N}$ and their neighborhood $f^{-1}\mathcal{O} \setminus F$ there exist a neighborhood $ \mathcal{O}y$ of $y$ and open sets $V_{l}$, $l\in\mathbb{N}$ such that 
  $$T_{l} \cap f^{-1}\mathcal{O}y \subset V_{l} \subset {\rm cl}_{f^{-1}\mathcal{O}y}V_{ l} \subset f^{-1}\mathcal{O}y \setminus F, \ l \in \mathbb{N}.$$ Therefore, $\bigcup\limits_{l=1}^{\infty} \mathrm {cl}_{f^{-1}\mathcal{O}y}(V_{l}) \cap F = \varnothing$. The $\sigma$-prenormality of the mapping $f_{\mathcal{O}}: f^{-1}\mathcal{O}\rightarrow \mathcal{O}$ is proven and the mapping $f$ is normal.
\end{proof}

In~\cite{PasOtobrFunct} {\it the submapping} $f\big|_{X_{0}}: X_{0}\to Y$ is defined as the restriction of the mapping $f: X\to Y$ to a subset $X_ {0} \subset X$. The submapping $f\big|_{X_{0}}: X_{0}\to Y$ is called an {\it open {\rm(}closed{\rm)} submapping} \cite{PasOtobrFunct} if $X_{0}$ is an open (closed) subset of $X$. By {\it disjoint submappings} $f|_{A}: A \to Y$, $f|_{B}: B \to Y$ of the mapping $f: X \to Y$ we understand that the subsets $A$ and $B$ are disjoint.

\begin{definition} \cite[Definition 6]{liseev3}.
\label{defFsimaMap}
A submapping $f|_{X_{0}}:X_{0} \to Y $ is said to be of $F_{\sigma}$-{\it type} {\rm (}or is a {\it $F_{\sigma}$--submapping}{\rm )} if for any point $y \in Y$ there is its neighborhood $\mathcal{O}y \subset Y$ such that
$(f\big|_{X_{0}})^{-1} \mathcal{O}y$ is a $F_{\sigma}$--subset of $f^{-1} \mathcal{O}y $.
\end{definition}

A convenient generalization of the “small Urysohn Lemma” for $\sigma$-normal mappings is the following. 

\begin{lemma}
	Let $f: X \to Y$ be a $\sigma$--normal mapping. Then for
	any $\mathcal{O}\in\tau_{Y}$ and any disjoint closed submapping $f_{\mathcal{O}}|_{F}: F \to \mathcal{O}$ and $F_{\sigma }$-submapping $f_{\mathcal{O}}|_{T}: T\to Y$ of the mapping $f_{\mathcal{O}}: f^{-1}\mathcal{O} \to \mathcal {O}$ and any point $y \in \mathcal{O}$
	there are consistent families of binary partitions $\Gamma_{l}=\{\mathcal{O}_{n}, \{U_{n}^{k}(l)|\ k\in\overline{0, 2^{n }-1}\}|\ n\in\omega_0\}$, $l \in \mathbb{N}$, of a mapping $f: X \to Y$ at $y$ such that for any $n \in\mathbb{N}$
	\begin{itemize}
		\item[{\rm (a)}] $T \cap f^{-1}\mathcal{O}_{1} = \bigcup\limits_{l=1}^{\infty} T_{l}$ , where $T_{l}$ is closed in $f^{-1}\mathcal{O}_{1}$, $l \in \mathbb{N}$\rm;
		\item[{\rm (b)}] $F\cap f^{-1}\mathcal{O}_n\subset U_{n}^{0}(l)$,\ \ $T_{l} \cap f^{-1}\mathcal{O}_n\subset U_{n}^{2^{n}-1}(l)$, $l \in \mathbb{N}$\rm;
		\item[{\rm (c)}] $F\cap \mathrm{cl}_{ f^{-1}\mathcal{O}_n}(\bigcup\limits_{k=1}^{2^{ n}-1} U_{n}^{k}(l))=\varnothing,\ \ \mathrm{cl}_{ f^{-1}\mathcal{O}_n}(\bigcup\limits_{k =0}^{2^{n}-2} U_{n}^{k}(l)) \cap T_{l} = \varnothing$, $l \in \mathbb{N}$.
	\end{itemize}
	
	\label{lemFSetPartitionOfSigmaNormalMap}
\end{lemma}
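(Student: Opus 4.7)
The plan is to mirror the inductive construction in the proof of Lemma~\ref{lemFSetPartitionofNormalMap}, carrying it out for all $l\in\mathbb{N}$ simultaneously over a single shared sequence of neighborhoods $\{\mathcal{O}_n\}$; $\sigma$-normality, in the convenient form of Lemma~\ref{lowUryslemmaSigmaNorm}, is the tool that lets the countably many $l$-indexed refinements be coordinated with a single neighborhood of $y$ at each stage.

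I would start with the usual reductions: assume $\mathcal{O}=Y$, and use the $F_\sigma$-submapping hypothesis on $T$ to fix a neighborhood $\mathcal{O}_1$ of $y$ on which $T\cap f^{-1}\mathcal{O}_1 = \bigcup_{l=1}^{\infty} T_l$ with each $T_l$ closed in $f^{-1}\mathcal{O}_1$; this already delivers condition (a). Set $\mathcal{O}_0=Y$ and $U_0^0(l)=X$. For the base case $n=1$, apply Lemma~\ref{lowUryslemmaSigmaNorm} to the $F_\sigma$-set $T\cap f^{-1}\mathcal{O}_1$ and to its neighborhood $f^{-1}\mathcal{O}_1\setminus F$: after possibly further shrinking $\mathcal{O}_1$, one obtains open sets $V_l\subset f^{-1}\mathcal{O}_1$ with
\[
T_l\cap f^{-1}\mathcal{O}_1 \subset V_l \subset \mathrm{cl}_{f^{-1}\mathcal{O}_1}V_l \subset f^{-1}\mathcal{O}_1\setminus F,\qquad l\in\mathbb{N}.
\]
Setting $U_1^1(l)=V_l$ and $U_1^0(l)=f^{-1}\mathcal{O}_1\setminus V_l$ yields regular $2$-partitions of $f^{-1}\mathcal{O}_1$ satisfying (b) and (c).

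At the inductive step $n\to n+1$, assuming the shared $\mathcal{O}_n$ and the $l$-indexed regular $2^n$-partitions $\{U_n^k(l)\}_k$ of $f^{-1}\mathcal{O}_n$ have been built, I follow the same three-type separation scheme used in the proof of Lemma~\ref{lemFSetPartitionofNormalMap}: for each $p\in\overline{0,2^n-1}$ and each $l\in\mathbb{N}$, an open set $V_l^p\subset f^{-1}\mathcal{O}_{n+1}$ must be inserted to refine the $l$-th partition, with $F$ and $T_l$ playing the roles of the extremal closed sides at $p=0$ and $p=2^n-1$ respectively. The decisive observation is that for each fixed $p$ the $l$-indexed closed sets pool into an $F_\sigma$-subset admitting a common open target read off from the inductive hypothesis, so that a single invocation of Lemma~\ref{lowUryslemmaSigmaNorm} furnishes at one stroke a neighborhood $\mathcal{O}_{n+1}^p$ of $y$ together with the whole family $\{V_l^p\}_l$. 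The shared neighborhood $\mathcal{O}_{n+1}=\bigcap_{p=0}^{2^n-1}\mathcal{O}_{n+1}^p$ is then a finite intersection, hence open, and the refinements
\[
U_{n+1}^{2k}(l)=(U_n^k(l)\setminus V_l^k)\cap f^{-1}\mathcal{O}_{n+1},\qquad U_{n+1}^{2k+1}(l)=(U_n^k(l)\cap V_l^k)\cap f^{-1}\mathcal{O}_{n+1}
\]
are defined exactly as in the normal case.

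The main obstacle is the $\sigma$-normal packaging of the intermediate separations (those with $p\in\overline{1,2^n-2}$), where both the closed side and the open target genuinely depend on $l$: one must extract from the $l$-indexed open pieces of the inductive partition structure a single common open neighborhood into which every $V_l^p$ can be fitted, so that Lemma~\ref{lowUryslemmaSigmaNorm} applies and yields the full family $\{V_l^p\}_l$ at once, avoiding the inadmissible countable intersection of neighborhoods of $y$. Once all the $V_l^p$ are produced, verifying that the new partitions are consistent regular $2^{n+1}$-partitions of $f^{-1}\mathcal{O}_{n+1}$ and that conditions (b) and (c) hold proceeds verbatim from the bookkeeping in the proof of Lemma~\ref{lemFSetPartitionofNormalMap}.
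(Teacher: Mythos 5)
Your proposal is correct and follows essentially the same route as the paper: the same reductions, the same use of the $F_\sigma$-submapping hypothesis to secure condition (a) at stage $n=1$, and the same inductive three-type separation in which the $l$-indexed closed sets are pooled into a single $F_\sigma$-set with a common open target (for the intermediate $p$ the union over $l$ of the open tails $\bigcup_{m=p-1}^{2^n-1}U_n^m(l)$), so that one application of Lemma~\ref{lowUryslemmaSigmaNorm} per $p$ and a finite intersection of neighborhoods suffice. The ``main obstacle'' you flag is resolved in the paper exactly as you indicate.
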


\begin{proof}
	Without loss of generality, we assume that $\mathcal{O} = Y$, the sets $F,\ T$ are disjoint in $X$ and $y\in Y$.
	Let $\mathcal{O}_{0} = Y$, $U_{0}^{0}(l) = X$, $l \in \mathbb{N}$. Let's construct by induction the consistant families of binary partitions $\Gamma_l$, $l\in\mathbb N$, for the mapping $f: X \to Y$ at the point $y$.
	
	Induction base $n=1$. Since $f|_{T}$ is a $F_{\sigma}$--submapping, there exists a neighborhood $\mathcal{O}'_{1} \in \tau_{Y}$ of $y$ such that $ T\cap f^{-1}\mathcal{O}'_{1} = \bigcup\limits_{l=1}^{\infty} T_{l}$, where $T_{l}$ is closed in $ f^{-1}\mathcal{O}'_{1}$, $l\in \mathbb{N}$. By Lemma~\ref{lowUryslemmaSigmaNorm} there is a neighborhood $\mathcal{O}y \subset \mathcal{O}$ of $y$ such that in $f^{-1}\mathcal{O}y $ there exist  neighborhoods $ V_{l}$ of the sets $T_{l} \cap f^{-1}\mathcal{O}y$, $l \in \mathbb{N}$ such that
	\begin{equation*}
	T_{l} \cap f^{-1}\mathcal{O}y \subset V_{l} \subset {\rm cl}_{f^{-1}\mathcal{O}y}V_{l} \subset U \cap f^{-1}\mathcal{O}y, \ l \in \mathbb{N}.
	\end{equation*}
	Let $\mathcal{O}_1=\mathcal{O}y$, $U^{0}_{1}=f^{-1}\mathcal{O}_1\setminus V_l$, $U^1_1= f^{-1}\mathcal{O}_1\cap V_l$, $l\in\mathbb N$.
	Thus, the validity of condition (a) is established.
	
	Induction step. Suppose that for $i \leqslant n$ and for any $l \in \mathbb{N}$ the neighborhoods $\mathcal{O}_{i}$ of the point $y$, $\mathcal{O}_{i+1} \subset \mathcal{O}_{i} $, $i < n$, and the families of regular $2^{i}$-partitions $\{ \{U^{j}_{i}(l) | j\in\overline{0, 2^{i}-1}\} |\ i\in\overline{0,n}\}$ of subspaces $f^{-1} \mathcal{O}_{i}$, satisfying conditions (b), (c) of the lemma and condition of the Definition~\ref{defCoherenceSetBinPart} have been constructed.
	
	From the $\sigma$-normality of the mapping $f$, using Lemma~\ref{lowUryslemmaSigmaNorm}, we have the following.
	
	Firstly, for the $F_{\sigma}$--subset
	$\bigcup\limits_{l=1}^{\infty}\mathrm{cl}_{f^{-1}\mathcal{O}_{n}} \Big( \bigcup\limits_{j=1} ^{2^{n}-1} U_{n}^{j}(l) \Big)$ of $f^{-1}\mathcal{O}_{n}$ and its neighborhood $f^{-1}\mathcal{O}_{n} \setminus F$ there are a neighborhood $\mathcal{O}_{n+1}^{0} \subset \mathcal{O}_{n}$ of $y$ and open in $f^{-1}\mathcal{O}_{n+1}^{ 0} $ subsets $\widetilde{V}^{0}(l)$, $l \in \mathbb{N}$ such that 
	\begin{equation}\label{eqno(2.6.1.1)}
	\mathrm{cl}_{f^{-1}\mathcal{O}_{n+1}^{0}} \big(\bigcup\limits_{j=1}^{2^{n}-1 } (U_{n}^{j}(l) \cap f^{-1}\mathcal{O}^0_{n+1})\big) \subset \widetilde{V}_{l}^{ 0} \subset \mathrm{cl}_{f^{-1}\mathcal{O}_{n+1}^{0}} \widetilde{V}_{l}^{0} \subset f^ {-1} \mathcal{O}_{n+1}^{0} \setminus F, \ l \in \mathbb{N}.
	\end{equation}

Secondly, provided that $p\in\overline{2,2^{n}-1}$, for the $F_{\sigma}$-subset $\bigcup\limits_{i=1}^{\infty}\big(\mathrm{cl}_{f^{-1}\mathcal{O}_{n}} \big(\bigcup\limits_{m=p}^{2^{n}-1} U_{n }^{m}(l)\big)\big)$ of $f^{-1}\mathcal{O}_{n}$, and its neighborhood
	$
	\bigcup\limits_{i=1}^{\infty}\big(\bigcup\limits_{m=p-1}^{2^{n}-1} (U_{n}^{m}(l) \cap f^{-1}\mathcal{O}_{n})\big)
	$
there are a neighborhood $\mathcal{O}_{n+1}^{p-1} \subset \mathcal{O}_{n}$ of $y$, and open in $f^{-1}\mathcal{O}_{n+1}^{p-1}$ subsets $\widetilde{V}^{p-1}_{l}\subset\bigcup\limits_{m=p-1}^{2 ^{n}-1} U_{n}^{m}(l)$, $ l \in \mathbb{N}$, such that
\begin{equation}
	\begin{aligned}
		\mathrm{cl}_{f^{-1}\mathcal{O}_{n+1}^{p-1}}\big(\bigcup\limits_{m=p}^{2^{n} -1} (U_{n}^{m}(l)\cap f^{-1}\mathcal{O}^{p-1}_{n+1}\big)\big)\subset \widetilde {V}^{p-1}_{l}\subset \mathrm{cl}_{f^{-1}\mathcal{O}_{n+1}^{p-1}} \widetilde{V }^{p-1}_{l}\subset \\
	\subset \bigcup\limits_{i=l}^{\infty}\big(\bigcup\limits_{m=p-1}^{2^{n}-1} (U_{n}^{m}( l)\cap f^{-1}\mathcal{O}_{n+1}^{p-1})\big), \ l \in \mathbb{N}.
	\label{eqno(2.6.1.2)}
	\end{aligned}
\end{equation}

Thirdly, for the $F_{\sigma}$--subset $\bigcup\limits_{l=1}^{\infty}\Big(T_{l} \cap f^{-1}\mathcal{O} _{n}\Big)$ of $f^{-1}\mathcal{O}_{n}$ and its neighborhood $\bigcup\limits_{l=1}^{\infty} U_{n}^{ 2^{n}-1} (l) $ there are a neighborhood $\mathcal{O}_{n+1}^{2^{n}-1} \subset \mathcal{O}_{n}$ of $y$ and open in $f^{-1} \mathcal{O}_{n+1}^{2^{n}-1} $ subsets $\widetilde{V}^{2^{n} -1}(l)$, $l \in \mathbb{N}$, such that
\begin{equation} \label{eqno(2.6.1.3)}
     \begin{aligned}
T_{l} \cap f^{-1}\mathcal{O}_{n+1}^{2^{n}-1}\ \ \subset \widetilde{V}_{l}^{2^ {n}-1} & \subset\ \ \mathrm{cl}_{\mathcal{O}_{n+1}^{2^{n}-1}} \widetilde{V}_{l}^ {2^{n}-1} \subset \\
         & \subset \bigcup\limits_{l=1}^{\infty}\Big( U_{n}^{2^{n}-1}(l) \cap f^{-1}\mathcal{O} _{n+1}^{2^{n}-1} \Big), \ l \in \mathbb{N}.
     \end{aligned}
\end{equation}
Put $\mathcal{O}_{n+1} = \bigcap\limits_{p=0}^{2^{n}-1} \mathcal{O}_{n}^{p}$ , $V_{l}^{p} = \widetilde{V}_{l}^{p} \cap f^{-1} \mathcal{O}_{n+1}$, $l \in \mathbb{N}$, $p\in\overline{0, 2^{n}-1}$. Then for the families of sets $\{V_{l}^{p} \}_{p=0}^ {2^{n}-1}$, $l \in \mathbb{N}$, the following hold. For any $l \in \mathbb{N}$

\begin{itemize}
	\item[(i)] directly from the construction it follows that $V_{l}^{p} \supset V_{l}^{p+1}$, $p\in\overline{0,2^{n}-2 }$;
	
	\item[(ii)] from the inclusions (\ref{eqno(2.6.1.1)}) and the fulfillment of condition (b) for regular $2^{n}$-partitions $\{ U^{k}_ {n}(l)\}_{k=0}^{2^{n}-1}$ of the subspace $f^{-1}\mathcal{O}_{n}$ it follows that
	\begin{equation*}
	\begin{aligned}
	F \cap \mathrm{cl}_{f^{-1}\mathcal{O}_{n+1}} V_{l}^{0}=\varnothing,\\
	\mathrm{cl}_{ f^{-1}\mathcal{O}_{n+1}}\Big(\bigcup\limits_{k=1}^{2^{n}-1} (U_{ n}^{k}(l)\cap f^{-1}\mathcal{O}_{n+1})\Big)\subset V_{l}^{0};
	\end{aligned}
	\end{equation*}
	
	\item[(iii)] from the inclusions (\ref{eqno(2.6.1.2)}) it follows that for $p\in\overline{0, 2^{n}-3}$
	\begin{equation*}
	\begin{aligned}
	(\bigcup\limits_{l=1}^{\infty}\bigcup\limits_{k=0}^{p} (U_{n}^{k}(l)\cap f^{-1}\mathcal {O}_{n+1})\big)\cap\mathrm{cl}_{ f^{-1}\mathcal{O}_{n+1}} V_{l}^{p+1} =\varnothing,\\
	\mathrm{cl}_{ f^{-1}\mathcal{O}_{n+1}}(\bigcup\limits_{k=p+2}^{2^{n}-1} (U_{ n}^{k}(l)\cap f^{-1}\mathcal{O}_{n+1})\big)\subset V_{l}^{p+1};
	\end{aligned}
	\end{equation*}
	
	\item[(iv)] from the inclusions (\ref{eqno(2.6.1.3)}) and the fulfillment of condition (c) for a regular $2^{n}$-partitions $\{ U^{k}_ {n}(l) \}_{k=0}^{2^{n}-1}$ of the subset $f^{-1}\mathcal{O}_{n}$ it follows that
	\begin{equation*}
	\begin{aligned}
	(\bigcup\limits_{l=1}^{\infty}\bigcup\limits_{k=0}^{2^{n}-2} (U_{n}^{k}(l)\cap f^ {-1}\mathcal{O}_{n+1})\big)\cap\mathrm{cl}_{ f^{-1}\mathcal{O}_{n+1}} V_{l} ^{2^{n}-1} = \varnothing,\\
	T_{l}\cap f^{-1}\mathcal{O}_{n+1}\subset V_{l}^{2^{n}-1}.
	\end{aligned}
	\end{equation*}
\end{itemize}

Put $U_{n+1}^{2k} (l) = (U_{n}^{k}(l) \setminus V_{l}^{k})\cap f^{-1}\mathcal{ O}_{n+1}$, $ U_{n+1}^{2k+1}(l) = (U_{n}^{k}(l)\cap V_{l}^{k}) \cap f^{-1}\mathcal{O}_{n+1}$, $ k\in\overline{0, 2^{n}-1}$, $l \in \mathbb{N}$. Then from (ii) -- (iv) it follows that $\{U^{k}_{n+1}(l) |\ k\in\overline{0 , 2^{n+1}-1}\}$ is a regular $2^{n+1}$--partition of the subspace $f^{-1} \mathcal{O}_{n+1}$, $l \in \mathbb{N}$, for which condition of the Definition~\ref{defCoherenceSetBinPart}  is fulfilled (i.e. $U_{n+1}^{2k}(l)\cup U_{n+1}^{2k+1}(l)=U_{ n}^{k}(l)\cap f^{-1}\mathcal{O}_{n+1},\ k\in\overline{0, 2^{n}-1}$).

The constructed sequence of neighborhoods $\mathcal{O}_{n}$ and the families of regular $2^{n}$-partitions of the subspace $f^{-1} \mathcal{O}_{n}$, $n\in \omega_0$, are the consistent families $\{\mathcal{O}_{n}, \{U_{n}^{k}(l)| \ k\in\overline{0, 2^{n}-1}\}|\ n\in\mathbb\omega_0\}$ of binary partitions of the mapping $f: X \to Y$ at $y$ for each $l \in \mathbb{N}$. The fulfillment of properties (b) and (c) follows from (ii) and (iv).
\end{proof}

\begin{theorem}\label{thFuncCharSigmNorm}
	For a mapping $f: X \to Y$ the following conditions are equivalent.
	\begin{itemize}
	\item[{\rm (A)}] The mapping $f$ is $\sigma$--normal.
	\item[{\rm (B)}] For any $\mathcal{O}\in\tau_{Y}$, any disjoint closed submapping $f_{\mathcal{O}}|_{F}: F \to \mathcal{O}$ and $F_{\sigma}$-submapping $f_{\mathcal{O}}|_{T}: T\to Y$ of the mapping $f_{\mathcal{O}}: f^{ -1}\mathcal{O} \to \mathcal{O}$ and any point $y \in \mathcal{O}$
	there exist consistent families of binary partitions $\{\mathcal{O}_{n}, \{U_{n}^{k}(l)|\ k\in\overline{0, 2^{n}-1}\} |\ n\in\omega_0\}$, $l \in \mathbb{N}$, of the mapping $f: X \to Y$ at $y$ such that for any $n \in \mathbb{N} $
	\begin{itemize}
	\item[{\rm (a)}] $T \cap f^{-1}\mathcal{O}_{1} = \bigcup\limits_{l=1}^{\infty} T_{l}$ , where $T_{l}$ is closed in $f^{-1}\mathcal{O}_{1}$, $l \in \mathbb{N}$\rm;
	\item[{\rm (b)}] $F\cap f^{-1}\mathcal{O}_n\subset U_{n}^{0}(l)$,\ \ $T_{l} \cap f^{-1}\mathcal{O}_n\subset U_{n}^{2^{n}-1}(l)$, $l \in \mathbb{N}$\rm;
	\item[{\rm (c)}] $F\cap \mathrm{cl}_{ f^{-1}\mathcal{O}_n}(\bigcup\limits_{k=1}^{2^{ n}-1} U_{n}^{k}(l))=\varnothing,\ \ \mathrm{cl}_{ f^{-1}\mathcal{O}_n}(\bigcup\limits_{k =0}^{2^{n}-2} U_{n}^{k}(l)) \cap T_{l} = \varnothing$, $l \in \mathbb{N}$.
	\end{itemize}
	\item[{\rm (C)}] For any $\mathcal{O}\in\tau_{Y}$ and any disjoint closed submapping $f_{\mathcal{O}}|_{F}: F \to Y$ and $F_{\sigma}$--submapping $f_{\mathcal{O}}|_{T}: T\to Y$ of the mapping $f_{\mathcal{O}}: f^{-1} \mathcal{O} \to \mathcal{O}$ and any point $y \in \mathcal{O}$ there exist a  neighborhood $\mathcal{O}y$ of $y$ and a $f$-equicontinuous  at $y$ family of functions $\varphi_{l}: X \to [0,1]$, $l \in \mathbb{N}$, such that
	\begin{itemize}
	\item[{\rm (a)}] $T\cap f^{-1}\mathcal{O}y=\bigcup\limits_{n=1}^\infty T_{l}$, $T_{l }$ are closed in $f^{-1}\mathcal{O}y$, $l\in\mathbb{N}$;
	\item[{\rm (b)}] $\mathrm{osc}_{\varphi_{l}}(f^{-1}\mathcal{O}y) < \frac{1}{2}$, $l \in \mathbb{N}$;
	\item[{\rm (c)}] $F\cap f^{-1}\mathcal{O}y\subset\varphi_{l}^{-1}(0)\cap f^{-1} \mathcal{O}y$ and $T_{l}\cap f^{-1}\mathcal{O}y \subset \varphi_{l}^{-1}(1)\cap f^{-1} \mathcal{O}y$, $l \in \mathbb{N}$;
	\item[{\rm (d)}] $T_{l}\cap f^{-1}\mathcal{O}y \subset \mathrm{int}_{ f^{-1}\mathcal{O} y}(\varphi_{l}^{-1}(\frac{1}{2}, 1]\cap f^{-1}\mathcal{O}y)$, \\
	$ \mathrm{cl}_{ f^{-1}\mathcal{O}y}(\varphi_{l}^{-1}(\frac{1}{2}, 1]\cap f^{- 1}\mathcal{O}y)\cap F = \varnothing$, $l \in \mathbb{N}$.
	\end{itemize}
	\end{itemize}
\end{theorem}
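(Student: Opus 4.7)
The plan is to prove the cycle (A) $\Rightarrow$ (B) $\Rightarrow$ (C) $\Rightarrow$ (A), following the same architecture as Theorem~\ref{thMainFuncChar}. The implication (A) $\Rightarrow$ (B) is immediate from Lemma~\ref{lemFSetPartitionOfSigmaNormalMap}, which already produces the required consistent families of binary partitions associated with $F$ and the countable family $\{T_l\}$.

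For (B) $\Rightarrow$ (C), I would apply Proposition~\ref{prFContMapOnCorPart} to each consistent family $\Gamma_l$ separately, producing for every $l$ a function $\varphi_l:X\to[0,1]$ with $\mathrm{osc}_{\varphi_l}(f^{-1}\mathcal{O}_n)\leqslant \frac{1}{2^n-1}$. The crucial point — and the reason this approach gives more than individual $f$-continuity — is that in (B), and already in Lemma~\ref{lemFSetPartitionOfSigmaNormalMap}, the sequence $\{\mathcal{O}_n\}$ does not depend on $l$, so this oscillation estimate is uniform in $l$, which is precisely the $f$-equicontinuity of Definition~\ref{defEquicontMapSeq}. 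Taking $\mathcal{O}y=\mathcal{O}_2$ gives $\mathrm{osc}_{\varphi_l}(f^{-1}\mathcal{O}y)\leqslant\tfrac13<\tfrac12$, which is (C)(b); since $\mathcal{O}y\subset\mathcal{O}_1$, condition (C)(a) carries over from (B)(a); and the inclusions $F\cap f^{-1}\mathcal{O}_n\subset U_n^{0}(l)$, $T_l\cap f^{-1}\mathcal{O}_n\subset U_n^{2^n-1}(l)$ from (B)(b) force $\varphi_l\equiv 0$ on $F\cap f^{-1}\mathcal{O}y$ and $\varphi_l\equiv 1$ on $T_l\cap f^{-1}\mathcal{O}y$, which is (C)(c). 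Finally, (C)(d) follows by two applications of Lemma~\ref{lemOscDisj} to $\varphi_l$ on $f^{-1}\mathcal{O}y$: with $a=\tfrac12,\ b=1$ it yields $T_l\cap f^{-1}\mathcal{O}y\subset\mathrm{int}_{f^{-1}\mathcal{O}y}(\varphi_l^{-1}(\tfrac12,1]\cap f^{-1}\mathcal{O}y)$, and with $a=0,\ b=\tfrac12$ it separates $F$ from the closure of this same set.

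For (C) $\Rightarrow$ (A), I would reduce $\sigma$-normality to the shrinking form of Lemma~\ref{lowUryslemmaSigmaNorm}: given $\mathcal{O}\in\tau_Y$, a closed $F\subset f^{-1}\mathcal{O}$, an $F_\sigma$-set $T=\bigcup T_l$ in $f^{-1}\mathcal{O}$ disjoint from $F$, and $y\in\mathcal{O}$, it suffices to produce a neighborhood $\mathcal{O}y\subset\mathcal{O}$ of $y$ and open sets $V_l\supset T_l\cap f^{-1}\mathcal{O}y$ in $f^{-1}\mathcal{O}y$ with $\mathrm{cl}_{f^{-1}\mathcal{O}y}(V_l)\subset f^{-1}\mathcal{O}y\setminus F$. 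Applying (C) to the closed submapping $f|_F$ and the $F_\sigma$-submapping $f|_T$ yields such $\mathcal{O}y$ and a family $\{\varphi_l\}$; then $V_l=\mathrm{int}_{f^{-1}\mathcal{O}y}\bigl(\varphi_l^{-1}(\tfrac12,1]\cap f^{-1}\mathcal{O}y\bigr)$ satisfies the requirements directly by (C)(d).

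The step I expect to be the main obstacle is the uniformity bookkeeping in (B) $\Rightarrow$ (C): one must ensure that a single neighborhood $\mathcal{O}y$ of $y$ works simultaneously for every $l\in\mathbb{N}$, so that the family $\{\varphi_l\}$ is $f$-equicontinuous at $y$ rather than only pointwise $f$-continuous. This uniformity is already built into Lemma~\ref{lemFSetPartitionOfSigmaNormalMap}, whose neighborhoods $\mathcal{O}_n$ are chosen independently of $l$; once this is observed, the construction reduces to a parallel application of Proposition~\ref{prFContMapOnCorPart} indexed by $l$, combined with the two uses of Lemma~\ref{lemOscDisj} indicated above.
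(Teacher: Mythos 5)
Your proposal is correct and follows essentially the same route as the paper: (A)$\Rightarrow$(B) via Lemma~\ref{lemFSetPartitionOfSigmaNormalMap}, (B)$\Rightarrow$(C) by applying Proposition~\ref{prFContMapOnCorPart} to each $\Gamma_l$ with the $l$-independent neighborhoods $\mathcal{O}_n$ giving the uniform bound $\mathrm{osc}_{\varphi_l}(f^{-1}\mathcal{O}_2)\leqslant\frac13$ (hence $f$-equicontinuity), plus Lemma~\ref{lemOscDisj} for the separation conditions, and (C)$\Rightarrow$(A) by reading off the required shrinking neighborhoods from (C)(d). The observation you flag as the main obstacle — that the uniformity in $l$ is already built into Lemma~\ref{lemFSetPartitionOfSigmaNormalMap} — is exactly the point the paper's proof relies on.
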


\begin{proof}
\textbf{(A) $\Rightarrow$ (B)} by Lemma \ref{lemFSetPartitionOfSigmaNormalMap}.

\textbf{(B) $\Rightarrow$ (C).}
Condition (B) implies the existence of the consistent families of binary partitions $\{ \mathcal{O}_{n}, \{U^{k}_{n}(l) | \ k\in\overline{0,2^{n}-1} \}|\ n \in \omega_{0} \}$, $l \in \mathbb{N}$, of the mapping $f:X \to Y$ at $y$ for which satisfy conditions (a) --- (c) of (B).

By Proposition~\ref{prFContMapOnCorPart} for each $l \in \mathbb{N}$ by the consistent family of binary partitions $\{ \mathcal{O}_{n}, \{U^{k}_{n}( l) | \ k\in\overline{0,2^{n}-1} \}|\ n \in \omega_{0} \}$, $l \in \mathbb{N}$, of the mapping $f: X \to Y $ at $y$,
a $f$-continuous at  $y$ functions $\varphi_{n}^{l}: X \to [0,1]$ 
\begin{equation*}
\varphi_{n}^{l} (x) = \left\{ \begin{array}{ll}
0, & x \in X\setminus f^{-1} \mathcal{O}_{n}, \\
\frac{k}{2^{n}-1}, & x \in U^{k}_{n}(l), \ k\in\overline{0, 2^{n}-1} \ \
\end{array}\right.
\end{equation*}
are constructed. And they yield the construction of the family of $f$-continuous at  $y$ functions $\varphi_{l} : X \to [0,1]$
\begin{equation*}
\varphi_{l} (x) = \left\{ \begin{array}{ll}
\varphi_{n}^{l}(x), & x \in f^{-1}\mathcal{O}_{n} \setminus f^{-1}\mathcal{O}_{n+1 },\ n\in\mathbb\omega_0,\\
\lim\limits_{n \to \infty}\varphi_{n}^{l}(x), & x\in\bigcap\limits_{n=0}^{\infty} f^{-1}\mathcal {O}_{n},\\
\end{array}\right.
\end{equation*}
such that ${\rm osc}_{\varphi_{l}}(f^{-1}\mathcal{O}_{n})\leqslant \frac{1}{2^{n}-1}$ for all $l\in \mathbb{N}, \ n \in \omega_{0}$. Therefore, the family of functions $\{\varphi_{l} \}$ is $f$-equicontinuous at $y$.

Moreover, firstly, for all $l \in \mathbb{N}$

$\varphi_{l} (x) = 0$ for $x \in F$ (since $F \subset U_{n}^{0}(l)$, $n\in \mathbb{N} $),

$\varphi_{l} (x) = 1$ for $x \in T_{l}$ (since $T_{l} \subset U_{n}^{2^{n}-1}(l )$, $ n \in \mathbb{N}$).

Secondly, from the $f$-continuity of $\varphi_{l}$  the existence of a neighborhood $\mathcal{O}y = \mathcal{O}_{2}$ of  $y$ such that $\mathrm{osc }_{\varphi_{l}}(f^{-1}\mathcal{O}y)<\frac{1}{2}$, $l \in \mathbb{N}$, follows.
Then by Lemma \ref{lemOscDisj}
\begin{equation*}
F \cap f^{-1}\mathcal{O}y\subset \varphi_{l}^{-1}(0)\cap f^{-1}\mathcal{O}y\subset f^{- 1}\mathcal{O}y \setminus \mathrm{cl}_{f^{-1}\mathcal{O}y}(\varphi_{l}^{-1}[\tfrac{1}{2} , 1]\cap f^{-1}\mathcal{O}y),
\end{equation*}
and therefore, $T_{l}\cap f^{-1}\mathcal{O}y \subset \mathrm{int}_{ f^{-1}\mathcal{O}y}(\varphi_{l }^{-1}(\frac{1}{2}, 1]\cap f^{-1}\mathcal{O}y)$, $l \in \mathbb{N}$.
So $T_{l}\cap f^{-1}\mathcal{O}y\subset U^{1}_{1}(l) \cap f^{-1}\mathcal{O}y\subset \mathrm{int}_{ f^{-1}\mathcal{O}y} (\varphi_{l}^{-1}[\frac{1}{2}, 1]\cap f^{-1 }\mathcal{O}y), \l \in \mathbb{N}$.

\textbf{(C) $\Rightarrow$ (A).} Consider an arbitrary disjoint closed subset $F$ and $F_{\sigma}$--subset $T = \bigcup\limits_{l =1}^{\infty } T_{l}$ of the set $f^{-1}\mathcal{O} $ for an arbitrary mapping $f_{\mathcal{O}}: f^{-1}\mathcal{O} \to \mathcal{O }$. Let $y \in \mathcal{O}$ be arbitrary.
Then, it follows from item (d) of condition (C) that the neighborhood $\mathcal{O}y$ of the point $y$ and the neighborhoods $\mathrm{int}_{f^{-1}\mathcal{O}y}( \varphi_{l}^{-1}(\frac{1}{2}, 1]\cap f^{-1}\mathcal{O}y)$ of the sets $T_{l}\cap f^{-1}\mathcal{O}y$, $l\in\mathbb N$, are required.
\end{proof}


\section{Perfect normality of a mapping.}

\begin{definition} \label{defCommonPerfNorm}
A mapping $f:X\to Y$ is {\it perfectly normal} if for any open subset $O\subset X$ and for any point $y\in Y$ there exist 
a neighborhood $\mathcal Oy$ of $y$ and
a $f$-equicontinuous at $y\in Y$  countable family of functions $\{\varphi_{l}: X\to [0, 1]\ |\ l\in\mathbb N\}$ such that
\begin{itemize}
\item[{\rm (1)}] $O\cap f^{-1} \mathcal{O}y=\bigcup\limits_{l\in\mathbb{N}} ({\varphi_{l}} ^{-1} (1)\cap f^{-1} \mathcal{O}y),$
\item[{\rm (2)}] $f^{-1} \mathcal{O}y\setminus O\subset{\varphi_{l}}^{-1}(0)\cap f^{- 1} \mathcal{O}y,\ l\in\mathbb{N}.$
\end{itemize}
\end{definition}

\begin{remark} \label{remOnDefCommonPerfNorm} In the Definition~{\rm\ref{defCommonPerfNorm}} one has. 
	\begin{itemize}
	\item[{\rm (a)}] One can assume that $O \cap f^{-1} \mathcal{O}y = \bigcup\limits_{l \in \mathbb{N}} {\rm cl }_{f^{-1} \mathcal{O}y} (\varphi_{l}^{-1}(1)\cap f^{-1} \mathcal{O}y)$.
	\item[{\rm (b)}] The condition {\rm (2)} is equivalent to the following one $f^{-1} \mathcal{O}y\setminus O=\bigcap\limits_{l\in\mathbb{N} } {\varphi_{l}}^{-1} (0)\cap f^{-1} \mathcal{O}y$.
	\item[{\rm (c)}] Any open submapping is a $F_{\sigma}$-submapping.
	\end{itemize}
\end{remark}

\begin{proposition}\label{prHerFuncPerfNormMap}
	A submapping $f|_{X_{0}}: X_{0} \to Y$ of a perfectly normal mapping $f: X \to Y$ is perfectly normal.
\end{proposition}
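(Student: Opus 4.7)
The plan is to reduce perfect normality of the submapping to perfect normality of $f$ by extending an open set of $X_0$ to an open set of $X$ and then restricting the separating family of functions back to $X_0$.

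First, fix an arbitrary open subset $O_0$ of $X_0$ and a point $y\in Y$. By the definition of the subspace topology on $X_0$, there is an open subset $O\subset X$ such that $O_0=O\cap X_0$. Apply the perfect normality of $f: X\to Y$ (Definition~\ref{defCommonPerfNorm}) to the open set $O\subset X$ and the point $y$: this yields a neighborhood $\mathcal{O}y$ of $y$ and an $f$-equicontinuous at $y$ family $\{\varphi_l: X\to[0,1]\mid l\in\mathbb{N}\}$ satisfying conditions (1) and (2) of Definition~\ref{defCommonPerfNorm}. The candidates for the submapping are $\mathcal{O}y$ (unchanged) and the restrictions $\psi_l := \varphi_l|_{X_0}: X_0\to[0,1]$.

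Next, I would verify $(f|_{X_0})$-equicontinuity of $\{\psi_l\}$ at $y$. The key point is that, since open neighborhoods of a point $x\in X_0$ in $X_0$ are traces of open neighborhoods of $x$ in $X$, one has $\mathrm{osc}_{\psi_l}(x)\leqslant \mathrm{osc}_{\varphi_l}(x)$ for every $x\in X_0$, and hence $\mathrm{osc}_{\psi_l}((f|_{X_0})^{-1}\mathcal{O}')\leqslant \mathrm{osc}_{\varphi_l}(f^{-1}\mathcal{O}')$ for any open $\mathcal{O}'\subset Y$. So any neighborhood of $y$ controlling $\mathrm{osc}_{\varphi_l}$ uniformly in $l$ (which exists by the $f$-equicontinuity of $\{\varphi_l\}$) automatically controls $\mathrm{osc}_{\psi_l}$ uniformly in $l$.

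Finally, I would check conditions (1) and (2) of Definition~\ref{defCommonPerfNorm} for $(f|_{X_0}, O_0, \{\psi_l\}, \mathcal{O}y)$. Using $(f|_{X_0})^{-1}\mathcal{O}y = f^{-1}\mathcal{O}y\cap X_0$ and the elementary identity $(A\cap X_0)\setminus(O\cap X_0)=(A\setminus O)\cap X_0$ for $A\subset X$, both conditions follow by intersecting the corresponding identities for $\varphi_l$ and $O$ with $X_0$:
\begin{equation*}
O_0\cap (f|_{X_0})^{-1}\mathcal{O}y=\bigl(O\cap f^{-1}\mathcal{O}y\bigr)\cap X_0=\bigcup_{l\in\mathbb{N}}\bigl(\psi_l^{-1}(1)\cap (f|_{X_0})^{-1}\mathcal{O}y\bigr),
\end{equation*}
and similarly $(f|_{X_0})^{-1}\mathcal{O}y\setminus O_0\subset \psi_l^{-1}(0)\cap (f|_{X_0})^{-1}\mathcal{O}y$ for every $l$. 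There is no real obstacle here: the only content of the argument is the extension of $O_0$ to an open subset of $X$, together with the monotonicity of oscillation under restriction of the domain; everything else is bookkeeping.
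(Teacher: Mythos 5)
Your proof is correct and follows essentially the same route as the paper's: extend the open set $O_0\subset X_0$ to an open $O\subset X$, apply perfect normality of $f$ to obtain $\mathcal{O}y$ and $\{\varphi_l\}$, restrict the functions to $X_0$, and note that restriction preserves $f$-equicontinuity (you spell out the monotonicity of oscillation, which the paper leaves as ``obvious''). No gaps; the bookkeeping with intersections matches the paper's argument.
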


\begin{proof}
	Consider an arbitrary submapping $f|_{X_{0}}: X_{0} \to Y$ of the mapping $f: X \to Y$, a point $y \in Y$ and an open in $X_{0}$ subset  $O$. There is an open in $X$ subset $\widetilde{O}$ such that $O = \widetilde{O} \cap X_{0}$. From the perfect normality of the mapping $f$ it follows that there is a neighborhood $\mathcal{O}y$ of $y$ and a countable family of $f$-equicontinuous at $y\in Y$ functions $\{\varphi_{l}: X\to [0, 1]\ |\ l\in\mathbb N\}$, such that 
	$$\widetilde{O} \cap f^{-1} \mathcal{O}y = \bigcup\limits_{l\in\mathbb N} \varphi_{l}^{-1} (1)\cap f ^{-1} \mathcal{O}y ,\ f^{-1} \mathcal{O}y\setminus \widetilde{O}\subset \varphi_{l}^{-1}(0) \cap f ^{-1}\mathcal{O}y ,\ l \in\mathbb{N}.$$
	
	It is obvious that from the $f$-equicontinuity at $y\in Y$ of a countable family of functions $\{\varphi_{l}: X\to [0, 1]\ |\ l\in\mathbb N\}$ it follows $f|_{X_{0}}$--equicontinuity at $y\in Y$ of a countable family of functions $\{\varphi_{l}|_{X_{0}}: X_{0} \to [0, 1]\ |\ l\in\mathbb{N}\}$.
	
	Since $X_{0} \subset X$ and $ \mathcal{O}y \subset Y$, then $\widetilde{O} \cap X_{0} \cap f^{-1} \mathcal{O}y = O \cap (f|_{X_{0}})^{-1}\mathcal{O}y$ and $X_{0} \cap \varphi_{l}^{-1} M = (\varphi_ {l}|_{X_{0}})^{-1}M$, for $M \subset [0,1]$, $l \in \mathbb{N}$. Hence
	\begin{equation*}
	\begin{aligned}
	O \cap (f|_{X_{0}})^{-1} \mathcal{O}y & = \bigcup\limits_{l\in\mathbb{N}} (\varphi_{l}|_{ X_{0}})^{-1} (1) \cap f^{-1} \mathcal{O}y ,\\
	(f|_{X_{0}})^{-1} \mathcal{O}y\setminus O & \subset (\varphi_{l}|_{X_{0}})^{-1}(0 ) \cap f^{-1} \mathcal{O}y ,\ l \in\mathbb{N}.
	\end{aligned}
	\end{equation*}
\end{proof}
	
\begin{proposition}
	A perfectly normal mapping $f:X\to Y$ is prenormal.
\end{proposition}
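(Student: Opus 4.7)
The plan is to reduce $f$-separation of two disjoint closed sets to a countable-cover argument inside $f^{-1}\mathcal{O}y$, obtaining the covers from the functional data of Definition~\ref{defCommonPerfNorm} applied to the two complementary open sets, and then finishing with the classical interleaving of open neighborhoods.

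Fix disjoint closed subsets $A,B\subset X$ and a point $y\in Y$. Applying Definition~\ref{defCommonPerfNorm} to the open set $X\setminus B$ yields a neighborhood $\mathcal{O}^{A}y$ of $y$ and an $f$-equicontinuous at $y$ family $\{\alpha_{l}\colon X\to[0,1]\}_{l\in\mathbb{N}}$ such that $A\cap f^{-1}\mathcal{O}^{A}y\subset\bigcup_{l}\alpha_{l}^{-1}(1)$ and $B\cap f^{-1}\mathcal{O}^{A}y\subset\alpha_{l}^{-1}(0)$ for every $l$. Symmetrically, applying the definition to $X\setminus A$ produces $\mathcal{O}^{B}y$ and a family $\{\beta_{l}\}_{l\in\mathbb{N}}$. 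Taking the intersection of $\mathcal{O}^{A}y$ and $\mathcal{O}^{B}y$ and then invoking $f$-equicontinuity of both families, I would shrink further to a neighborhood $\mathcal{O}y$ in which $\mathrm{osc}_{\alpha_{l}}(f^{-1}\mathcal{O}y)<\tfrac{1}{4}$ and $\mathrm{osc}_{\beta_{l}}(f^{-1}\mathcal{O}y)<\tfrac{1}{4}$ hold simultaneously for all $l\in\mathbb{N}$.

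Working inside the subspace $X_{y}=f^{-1}\mathcal{O}y$, I would set
\begin{equation*}
U_{l}=\mathrm{int}_{X_{y}}\{x\in X_{y}\ |\ \alpha_{l}(x)\geqslant\tfrac{1}{2}\},\qquad V_{l}=\mathrm{int}_{X_{y}}\{x\in X_{y}\ |\ \beta_{l}(x)\geqslant\tfrac{1}{2}\}.
\end{equation*}
The uniform oscillation bound forces every point $x$ with $\alpha_{l}(x)=1$ to lie in an $X_{y}$-neighborhood on which $\alpha_{l}>\tfrac{3}{4}$, so $\alpha_{l}^{-1}(1)\cap X_{y}\subset U_{l}$; consequently $\{U_{l}\}$ covers $A\cap X_{y}$, and symmetrically $\{V_{l}\}$ covers $B\cap X_{y}$. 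Applying Lemma~\ref{lemOscDisj} to $\alpha_{l}|_{X_{y}}$ with $a=0$, $b=\tfrac{1}{2}$ (using $b-a=\tfrac{1}{2}>\tfrac{1}{4}>\mathrm{osc}_{\alpha_{l}}(X_{y})$) yields $\mathrm{cl}_{X_{y}}(U_{l})\cap B=\varnothing$, and symmetrically $\mathrm{cl}_{X_{y}}(V_{l})\cap A=\varnothing$ for every $l$.

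The last step is the classical diagonal construction: put
\begin{equation*}
\widetilde{U}_{l}=U_{l}\setminus\bigcup\limits_{k\leqslant l}\mathrm{cl}_{X_{y}}V_{k},\qquad \widetilde{V}_{l}=V_{l}\setminus\bigcup\limits_{k\leqslant l}\mathrm{cl}_{X_{y}}U_{k},
\end{equation*}
and take $U=\bigcup_{l}\widetilde{U}_{l}$, $V=\bigcup_{l}\widetilde{V}_{l}$. A routine verification shows that $U$ and $V$ are disjoint open subsets of $X_{y}$ containing $A\cap X_{y}$ and $B\cap X_{y}$ respectively, which is precisely $f$-separation at $y$. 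The main obstacle, and the one I expect to require the most care, is the passage from the raw functional data to open covers whose closures avoid the opposite closed set: since each $\alpha_{l}$ is only $f$-continuous at $y$, preimages of intervals need not be open or closed in $X$, and one must lean on the uniform oscillation estimate together with Lemma~\ref{lemOscDisj} to recover both the openness of $U_{l}$ and the disjointness of $\mathrm{cl}_{X_{y}}(U_{l})$ from $B$. Once these two ingredients are in place, the remaining combinatorial step is essentially the space-level argument.
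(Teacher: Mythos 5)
Your proof is correct and follows essentially the same route as the paper: apply Definition~\ref{defCommonPerfNorm} to the two complementary open sets, use $f$-equicontinuity to fix one neighborhood with a uniform oscillation bound, and use Lemma~\ref{lemOscDisj} to produce countable open covers of $A\cap f^{-1}\mathcal{O}y$ and $B\cap f^{-1}\mathcal{O}y$ whose closures miss the opposite set. The only difference is that you carry out the final diagonal (interleaving) construction explicitly, whereas the paper cites it as the normalizing lemma from Alexandrov--Pasynkov.
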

\begin{proof}
	Consider arbitrary disjoint subsets $F$ and $T$ closed in $X$ and a point $y \in Y$.
	
	From the perfect normality of the mapping $f$, for an open in $X$ subset $U = X \setminus F$ there exist a neighborhood $\mathcal{O}^{\prime}y $ of $y$ and a $f$-equicontinuous at $y$ family of functions $\{ {\varphi}_{l}: X \to [0,1] \}_{l \in \mathbb{N}}$ such that
	$$U \cap f^{-1} \mathcal{O}^{\prime}y = \bigcup\limits_{l\in\mathbb{N}} {\varphi}_{l}^{-1} (1) \cap f^{-1} \mathcal{O}y,\ F = f^{-1} \mathcal{O}^{\prime}y \setminus U \subset{\varphi}_{l }^{-1}(0) \cap f^{-1} \mathcal{O}y \ l \in\mathbb{N}.$$
	
	Similarly, for an open in $X$ subset $V = X \setminus T$ there are a neighborhood $\mathcal{O}^{\prime\prime}y $ of $y$ and a $f$-equicontinuous at $y$ family of functions $\{ {\psi}_{l}: X \to [0,1] \}_{l \in \mathbb{N}}$ such that
	$$V \cap f^{-1} \mathcal{O}^{\prime\prime}y = \bigcup\limits_{l\in\mathbb{N}} {\psi}_{l}^{- 1} (1) \cap f^{-1} \mathcal{O}y, \ T = f^{-1} \mathcal{O}^{\prime\prime}y \setminus V \subset{\psi }_{l}^{-1}(0) \cap f^{-1} \mathcal{O}y,\ l \in\mathbb{N}.$$
	
	Since the families of functions  $\{\varphi_{l}\}_{l \in \mathbb{N}}$ and $\{\psi_{l}\}_{l \in \mathbb{N}}$ are $f$-equicontinuous at $y$, there is a neighborhood $\mathcal{O}y \subset \mathcal{O}^{\prime}y \cap \mathcal{O}^{\prime\prime}y$ of $y$ such that
	${\rm osc}_{\varphi_{l}}(f^{-1}\mathcal{O}y) < \frac{1}{2}$ and ${\rm osc}_{\psi_{ l}}(f^{-1}\mathcal{O}y) < \frac{1}{2}$, $l \in \mathbb{N}$. Besides,
	\begin{equation}\label{eqPerfNormIsNormNew01}
		\begin{aligned}
			F \cap f^{-1}\mathcal{O}y \subset \bigcup\limits_{l\in\mathbb{N}}{\rm int}_{f^{-1} \mathcal{O}y }(\psi_{l}^{-1}(\tfrac{1}{2}, 1]\cap f^{-1} \mathcal{O}y) \subset \bigcup\limits_{l\in \mathbb{N}}{\rm cl}_{f^{-1} \mathcal{O}y}(\psi_{l}^{-1}(\tfrac{1}{2}, 1]\cap f^{-1} \mathcal{O}y), \\
			T \cap f^{-1}\mathcal{O}y \subset \bigcup\limits_{l\in\mathbb{N}}{\rm int}_{f^{-1} \mathcal{O}y }(\varphi_{l}^{-1}(\tfrac{1}{2}, 1] \cap f^{-1}\mathcal{O}y) \subset \bigcup\limits_{l\in \mathbb{N}}{\rm cl}_{f^{-1} \mathcal{O}y}(\varphi_{l}^{-1}(\tfrac{1}{2}, 1] \cap f^{-1}\mathcal{O}y).
		\end{aligned}
	\end{equation}

	From Lemma \ref{lemOscDisj} and inclusions (\ref{eqPerfNormIsNormNew01}) we have
	\begin{equation*}
	\begin{aligned}
		F \cap f^{-1}\mathcal{O}y \cap \bigcup\limits_{l\in\mathbb{N}}{\rm cl}_{f^{-1} \mathcal{O}y }(\varphi_{l}^{-1}(\tfrac{1}{2}, 1]\cap f^{-1} \mathcal{O}y) & = \varnothing, \\
		T \cap f^{-1}\mathcal{O}y \cap \bigcup\limits_{l\in\mathbb{N}}{\rm cl}_{f^{-1} \mathcal{O}y }(\psi_{l}^{-1}(\tfrac{1}{2}, 1]\cap f^{-1} \mathcal{O}y) & = \varnothing.
	\end{aligned}
	\end{equation*}
	
	The disjoint closed in $f^{-1}\mathcal{O}y$ subsets
	$F \cap f^{-1}\mathcal{O}y$ and
	$T \cap f^{-1}\mathcal{O}y$, countable families of neighborhoods $\{{\rm int}_{f^{-1} \mathcal{O}y}(\varphi_{l}^{-1}(\tfrac{1}{2}, 1] \cap f^{-1} \mathcal{O}y)\}_{l \in \mathbb{N}}$ and
	$\{{\rm int}_{f^{-1} \mathcal{O}y}(\psi_{l}^{-1}(\tfrac{1}{2}, 1] \cap f^ {-1} \mathcal{O}y)\}_{l \in \mathbb{N}}$ satisfy the normalizing Lemma \cite[Chapter 1, $\S$~5, Lemma~2]{AlexandrovPasynkov}.
	Thus, the subsets $F$ and $T$ are disjoint in $f^{-1}\mathcal{O}y$. This implies that the mapping $f$ is prenormal.
\end{proof}
	
Since the perfect normality of a mapping is a hereditary property, and any perfectly normal mapping is prenormal, then by \cite[Proposition~7]{liseev2} we have.

\begin{theorem}\label{thCommonPerfNormMapIsHerNorm}
A perfectly normal mapping $f:X\to Y$ is hereditarily normal.
\end{theorem}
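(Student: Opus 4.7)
The plan is to chain together the two immediately preceding results. By Proposition~\ref{prHerFuncPerfNormMap} every submapping $f|_{X_0}: X_0 \to Y$ of a perfectly normal $f$ is again perfectly normal, and by the preceding proposition every perfectly normal mapping is prenormal. Consequently, every submapping of $f$ is prenormal.

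To upgrade this to \emph{hereditary normality} — i.e.\ the requirement that every submapping of $f$ be itself normal — I would unfold the definition of normality of a mapping: the submapping $f|_{X_0}$ is normal iff for each $\mathcal{O}\in\tau_Y$ the codomain restriction $(f|_{X_0})_{\mathcal{O}}: X_0\cap f^{-1}\mathcal{O}\to\mathcal{O}$ is prenormal. Since $X_0\cap f^{-1}\mathcal{O}\subset X$, the map $f|_{X_0\cap f^{-1}\mathcal{O}}: X_0\cap f^{-1}\mathcal{O}\to Y$ is itself a submapping of $f$, so Proposition~\ref{prHerFuncPerfNormMap} applies to it and the previous proposition yields its prenormality. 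Restricting the codomain from $Y$ to $\mathcal{O}$ does not affect prenormality of the pair of closed disjoint subsets one needs to $f$-separate, so $(f|_{X_0})_{\mathcal{O}}$ is prenormal. This gives normality of $f|_{X_0}$ for arbitrary $X_0$, which is exactly hereditary normality.

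The cited \cite[Proposition~7]{liseev2} evidently packages this passage once and for all — from ``every submapping is prenormal'' to ``hereditarily normal'' — so the proof reduces to a single invocation of that result together with the two immediately preceding statements. No serious obstacle arises: all the substantive work has already been done in Proposition~\ref{prHerFuncPerfNormMap} (the hereditariness of perfect normality) and in the proposition showing that perfectly normal mappings are prenormal. The only bookkeeping point worth noting is that codomain restriction keeps us in the class of submappings of $f$, which is automatic since the relevant preimage remains a subset of~$X$.
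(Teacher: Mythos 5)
Your argument matches the paper's own proof exactly: the paper likewise combines Proposition~\ref{prHerFuncPerfNormMap} (hereditariness of perfect normality) with the proposition that perfectly normal mappings are prenormal, and then invokes \cite[Proposition~7]{liseev2} to pass from ``every submapping is prenormal'' to hereditary normality. The additional unfolding of the definition that you supply is harmless bookkeeping; no further comment is needed.
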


The following definition is a variant of extending the concept of a functionally open set to the case of mappings. It differs from the definition at~\cite[p. 76]{PasOtobrFunct} and allows to define the necessary condition of perfect normality, similar to Vedenisov’s condition of perfect normality for spaces.

\begin{definition} \label{defFuncOpenMap}
	A submapping $f|_{U}: U \to Y$ of a mapping $f: X \to Y$ is  {\it $f$--functionally open} if for any point $y$ there are a neighborhood $\mathcal{O}y$ of $y$ and a $f$-continuous function $\varphi: X\to [0,1]$ such that $U \cap f^{-1}\mathcal{O}y= \varphi^{ -1}(0,1] \cap f^{-1} \mathcal{O}y$.
	
	A submapping $f|_{F}: F \to Y$ of a mapping $f: X \to Y$ is {\it $f$--functionally closed} if for any point $y$ there are a neighborhood $\mathcal{O}y$ of $y$  and a $f$-continuous function $\varphi: X\to [0,1]$ such that $F\cap f^{-1}\mathcal{O}y = \varphi^{ -1}(0) \cap f^{-1} \mathcal{O}y$.
\end{definition}

\begin{remark} \label{aaa}
	A submapping $f|_{U}: U \to Y$ of a mapping $f: X \to Y$ is $f$-functionally open iff the submapping $f|_{X\setminus U} : X\setminus U \to Y$ is $f$-functionally closed.
\end{remark}
	
\begin{proposition}
	Any open submapping $f|_{O}: O \to Y$ {\rm (}closed submapping $f|_{F}: F \to Y${\rm )} of a perfectly normal mapping $f: X\to Y$ is $f$-functionally open {\rm (}respectively, $f$-functionally closed{\rm )}.
\end{proposition}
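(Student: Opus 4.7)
The plan is to handle the open case directly and then reduce the closed case to it via Remark~\ref{aaa}. For the open case, perfect normality produces, for each point $y\in Y$, an $f$-equicontinuous family of $[0,1]$-valued functions testing membership in $O$, and the natural move is to collapse this family to a single $f$-continuous function by a geometric weighted sum.

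Fix $y\in Y$ and let $O\subset X$ be open. Apply Definition~\ref{defCommonPerfNorm} to obtain a neighborhood $\mathcal{O}y$ of $y$ and an $f$-equicontinuous at $y$ family $\{\varphi_{l}:X\to[0,1]\}_{l\in\mathbb{N}}$ satisfying
\begin{equation*}
O\cap f^{-1}\mathcal{O}y=\bigcup\limits_{l\in\mathbb{N}}(\varphi_{l}^{-1}(1)\cap f^{-1}\mathcal{O}y),\qquad f^{-1}\mathcal{O}y\setminus O\subset\varphi_{l}^{-1}(0)\cap f^{-1}\mathcal{O}y,\ l\in\mathbb{N}.
\end{equation*}
Define $\varphi(x)=\sum\limits_{l=1}^{\infty}2^{-l}\varphi_{l}(x)$, which takes values in $[0,1]$.

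First, $\varphi$ is $f$-continuous at $y$: each summand $2^{-l}\varphi_{l}$ is $f$-continuous at $y$ (any member of an $f$-equicontinuous family is $f$-continuous at $y$, and $f$-continuity is preserved under scalar multiplication by Remark~\ref{remFcontMap}), and $\|2^{-l}\varphi_{l}\|\leqslant 2^{-l}$, so the norm series converges and Lemma~\ref{lemSumFcontMapFcont} applies. The set-theoretic identity $\varphi^{-1}(0,1]\cap f^{-1}\mathcal{O}y=O\cap f^{-1}\mathcal{O}y$ is then immediate: if $x\in O\cap f^{-1}\mathcal{O}y$, condition~(1) gives some $l$ with $\varphi_{l}(x)=1$, whence $\varphi(x)\geqslant 2^{-l}>0$; and if $x\in f^{-1}\mathcal{O}y\setminus O$, condition~(2) forces $\varphi_{l}(x)=0$ for every $l$, so $\varphi(x)=0$. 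Thus $f|_{O}$ is $f$-functionally open.

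For a closed submapping $f|_{F}:F\to Y$, the complementary submapping $f|_{X\setminus F}:X\setminus F\to Y$ is open in the perfectly normal mapping $f$, hence $f$-functionally open by the previous paragraph; Remark~\ref{aaa} then yields that $f|_{F}$ is $f$-functionally closed. The only non-trivial step is the $f$-continuity of $\varphi$, for which Lemma~\ref{lemSumFcontMapFcont} is tailor-made; everything else reduces to tracking the extremal values $0$ and $1$ that the $\varphi_{l}$ take on $f^{-1}\mathcal{O}y\setminus O$ and on $O\cap f^{-1}\mathcal{O}y$ respectively.
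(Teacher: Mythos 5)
Your proof is correct and coincides with the paper's own argument: the same geometric weighted sum $\varphi=\sum_{l}2^{-l}\varphi_{l}$, the same appeal to Lemma~\ref{lemSumFcontMapFcont} for $f$-continuity, and the same reduction of the closed case to the open one via Remark~\ref{aaa}. If anything, you are slightly more explicit than the paper in checking the reverse inclusion ($\varphi(x)=0$ for $x\in f^{-1}\mathcal{O}y\setminus O$).
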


\begin{proof} Consider an arbitrary point $y \in Y$ and an open in $X$ subset $O$.
    From the perfect normality of the mapping $f$ it follows that there are a $f$-equicontinuous countable family of functions $\{\varphi_{l}: X\to [0, 1]\ |\ l\in\mathbb{N} \}$ at $y\in Y$ and a neighborhood $\mathcal{O}y$ of $y$ such that
    $$O\cap f^{-1} \mathcal{O}y=\bigcup\limits_{l\in\mathbb{N}} \varphi_{l}^{-1} (1) \cap f^{ -1} \mathcal{O}y,\ f^{-1} \mathcal{O}y\setminus O\subset\varphi_{l}^{-1}(0) \cap f^{-1} \mathcal{O}y,\ l\in\mathbb{N}.$$
   
The series $\sum\limits_{l=1}^{\infty}\big|\big|\frac{\varphi_{l}}{2^{l}}\big|\big|_{f^{- 1}\mathcal{O}y}$ converges. So, by Lemma \ref{lemSumFcontMapFcont} the function $\varphi: X \to [0,1]$
    \begin{equation*}
\varphi (x)=\sum_{l=1}^{\infty}\frac{\varphi_{l}(x)}{2^{l}}
    \end{equation*}
    is $f$-continuous at $y$. If $x \in O \cap f^{-1}\mathcal{O}$, then there exists $l \in \mathbb{N}$ such that $x \in\varphi^{-1}(1) $. Then $\varphi (x) \geqslant \frac{\varphi_{l}(x)}{2^{l}} = \frac{1}{2^{l}} > 0 $. Therefore, $O \cap f^{-1}\mathcal{O}y = \varphi^{-1}(0, 1]\cap f^{-1} \mathcal{O}y$ and the submapping is $f$--functionally open.

The proof of the $f$-functional closeness of a closed submapping follows from the Remark~\ref{aaa}.
\end{proof}

\begin{definition}\label{defSigmPerfNorm}
	A normal mapping $f: X \to Y$ is called a {\it $co$-perfectly normal mapping} if its any open submapping $f|_U: U\to Y$, $U\in\tau_{X}$ is a $F_{\sigma}$-submapping~{\rm\cite[Definition 8]{liseev3}}.
	
	A $\sigma$-normal mapping $f: X \rightarrow Y$ is called a $co$-$\sigma$-{\it perfectly normal mapping} if its any open submapping $f|_U: U\to Y$, $U\in\tau_{X}$ is a $F_{\sigma}$-submapping~{\rm\cite[Definition 9]{liseev3}}.
\end{definition}

Any $co$-$\sigma$-perfectly normal mapping is hereditarily normal \cite[Theorem 14]{liseev3}. Any submapping of $co$-$\sigma$-perfectly normal mapping is $co$-perfectly normal \cite[Corollary 15]{liseev3}.

\begin{theorem}[{\it a functional characterization of a $co$-$\sigma$-perfect normality of a mapping}]\label{thFuncCharPerfSigmNormmap}
	A mapping $f: X \to Y$ is $co$-$\sigma$-perfectly normal iff for any $\mathcal{O}\in \tau_{Y}$, for any open subset $U\subset f^{-1}\mathcal{O}$, and for any $F_{\sigma}$-subset $F = \bigcup\limits_{l\in\mathbb{N}} F_{l }\subset U$, where the sets $F_{l}$ are closed in $f^{-1}\mathcal{O}$, $l \in \mathbb{N}$, for any point $y\in \mathcal {O}$
	there are a neighborhood $\mathcal{O}y$ of $y$ and a $f$-equicontinuous at $y$ family of functions $\varphi_{l}: X\to [0,1]$ and $\psi_{ l}: U \to [0,1]$, $l \in \mathbb{N}$, such that
	\begin{itemize}
	\item[{\rm (a)}] $\mathrm{osc}_{\varphi_{l}} ( f^{-1}\mathcal{O}y) < \frac{1}{2}$, $\mathrm{osc}_{\psi_{l}} ( f^{-1}\mathcal{O} y) < \frac{1}{2}$, $l \in \mathbb{N}$;
	\item[{\rm (b)}] $f^{-1}\mathcal{O}y \setminus U \subset \varphi_{l}^{-1}(0)\cap f^{-1} \mathcal{O}y$, $F_{l}\cap f^{-1}\mathcal{O}y\subset\varphi_{l}^{-1}(1)\cap f^{-1} \mathcal{O}$, $l\in\mathbb{N}$;
	\item[{\rm (c)}] $f^{-1}\mathcal{O}y\setminus U \subset \psi_{l}^{-1}(0)\cap f^{-1} \mathcal{O}$, $U\cap f^{-1}\mathcal{O}y = \bigcup \limits_{l\in\mathbb{N}} \psi^{-1}_{l}( 1)\cap f^{-1}\mathcal{O}$, $l \in \mathbb{N}$.
	\end{itemize}
\end{theorem}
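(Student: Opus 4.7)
The plan is to prove both directions by leveraging Theorem~\ref{thFuncCharSigmNorm} (the functional characterization of $\sigma$-normality) as the main engine, and Lemma~\ref{lemOscDisj} to convert the oscillation bound into closed-set statements.

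For the forward implication, assume $f$ is $co$-$\sigma$-perfectly normal and fix $\mathcal{O},U,F=\bigcup F_l,y$ as in the statement. Since $F\subset U$, the closed subset $f^{-1}\mathcal{O}\setminus U$ and the $F_\sigma$-subset $F$ of $f^{-1}\mathcal{O}$ are disjoint. Applying Theorem~\ref{thFuncCharSigmNorm}~(A)$\Rightarrow$(C) to the $\sigma$-normal mapping $f_{\mathcal{O}}\colon f^{-1}\mathcal{O}\to\mathcal{O}$ with these two subsets at $y$, I obtain a neighborhood $\mathcal{O}_{1}y\subset\mathcal{O}$ and a $f$-equicontinuous at $y$ family $\{\varphi_l\colon X\to[0,1]\}$ (extending by $0$ off $f^{-1}\mathcal{O}$) verifying the required separation together with $\mathrm{osc}_{\varphi_l}(f^{-1}\mathcal{O}_1 y)<\tfrac{1}{2}$. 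Next, since $f$ is $co$-$\sigma$-perfectly normal, the open submapping $f|_U$ is of $F_\sigma$-type, so there is a neighborhood $\mathcal{O}_{2}y\subset\mathcal{O}$ and subsets $U_l$ closed in $f^{-1}\mathcal{O}_2y$ with $U\cap f^{-1}\mathcal{O}_{2}y=\bigcup_l U_l$. The disjoint pair consisting of the closed set $f^{-1}\mathcal{O}_{2}y\setminus U$ and the $F_\sigma$-set $\bigcup U_l$ allows me to apply Theorem~\ref{thFuncCharSigmNorm}(C) once more and produce the desired family $\psi_l$ on a smaller neighborhood $\mathcal{O}_{3}y$. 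Setting $\mathcal{O}y=\mathcal{O}_{1}y\cap\mathcal{O}_{3}y$ yields the common neighborhood.

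For the reverse implication, assume the functional condition; I must verify $\sigma$-normality and that every open submapping has $F_\sigma$-type. For $\sigma$-normality, given disjoint closed $F$ and an $F_\sigma$-set $T=\bigcup T_l$ in $f^{-1}\mathcal{O}$, I invoke the hypothesis with $U=f^{-1}\mathcal{O}\setminus F$ (open) and this $F_\sigma$-set. The $\varphi_l$ family, together with Lemma~\ref{lemOscDisj} applied with $a=0$, $b=\tfrac12$ (using $\mathrm{osc}_{\varphi_l}<\tfrac12$), yields the open sets $\mathrm{int}_{f^{-1}\mathcal{O}y}(\varphi_l^{-1}(\tfrac12,1])$ which are neighborhoods of $T_l\cap f^{-1}\mathcal{O}y$ whose closures miss $F\cap f^{-1}\mathcal{O}y$, giving $\sigma$-prenormality of $f_{\mathcal{O}}$. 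For the $F_\sigma$-type of an open submapping $f|_U$, I apply the hypothesis with $\mathcal{O}=Y$, this $U$, and the empty $F_\sigma$-subset $F$. Setting $A_l=\mathrm{cl}_{f^{-1}\mathcal{O}y}(\psi_l^{-1}(1)\cap f^{-1}\mathcal{O}y)$ and invoking Lemma~\ref{lemOscDisj} with $a=\tfrac12$, $b=1$, I get $A_l\cap\{x\in f^{-1}\mathcal{O}y:\psi_l(x)\leqslant\tfrac12\}=\varnothing$; since $\psi_l\equiv 0$ on $f^{-1}\mathcal{O}y\setminus U$ by condition (c), this forces $A_l\subset U\cap f^{-1}\mathcal{O}y$. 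Combined with (c) one concludes $U\cap f^{-1}\mathcal{O}y=\bigcup_l A_l$, which is $F_\sigma$ in $f^{-1}\mathcal{O}y$.

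The main obstacle is the $F_\sigma$-type argument in the reverse direction: the preimages $\psi_l^{-1}(1)$ need not be closed, and the oscillation bound must be used through Lemma~\ref{lemOscDisj} to upgrade them to genuinely closed sets $A_l$ whose union still equals $U\cap f^{-1}\mathcal{O}y$. The forward direction is essentially bookkeeping---two successive invocations of Theorem~\ref{thFuncCharSigmNorm}(C) and a careful intersection of nested neighborhoods, with the sole non-trivial input being the $F_\sigma$-decomposition of $U$ supplied by $co$-$\sigma$-perfect normality.
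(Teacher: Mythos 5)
Your proof is correct and follows essentially the same route as the paper: both directions are driven by Theorem~\ref{thFuncCharSigmNorm} together with Lemma~\ref{lemOscDisj}, and the closure trick $A_l=\mathrm{cl}_{f^{-1}\mathcal{O}y}(\psi_l^{-1}(1)\cap f^{-1}\mathcal{O}y)$ in the sufficiency part is exactly the paper's argument for the $F_\sigma$-type of open submappings. The only cosmetic difference is that in the necessity part the paper makes a single application of Theorem~\ref{thFuncCharSigmNorm} to the combined decomposition $\{T'_l\cup F'_l\}$ of $U\cap f^{-1}\mathcal{O}'y$ to obtain one jointly $f$-equicontinuous family, whereas you apply it twice and intersect neighborhoods --- which is fine, since the union of two families $f$-equicontinuous at $y$ is again $f$-equicontinuous at $y$.
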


\begin{proof} {\it Necessity.} Without loss of generality, we will assume that $Y = \mathcal{O}$. Let fix an arbitrary point $y \in Y$. From the $co$-$\sigma$-perfect normality of the mapping $f$ it follows that its open submapping $f|_{U}: U \to Y$ has type of $F_{\sigma}$.
It means that there is a neighborhood $\mathcal{O}'y$ of $y$ such that $f^{-1}\mathcal{O}'y\cap U = \bigcup\limits_{l=1}^{\infty} T'_i$, where $T'_{l}$ is closed in $f^{-1}\mathcal{O}'y$ for all $l \in \mathbb{N}$.
	
	For a countable family of closed subsets $T'_{l}$, $F'_{l} = f^{-1}\mathcal{O}'y \cap F_{l}$, $l \in \mathbb{N}$, we have
	\begin{equation*}
	\bigcup\limits_{l=1}^{\infty} (T'_{l} \cup F'_{l}) = f^{-1}\mathcal{O}'y\cap U \Longrightarrow \bigcup\limits_{l=1}^{\infty} (T'_{l} \cup F'_{l}) \cap \big(f^{-1}\mathcal{O}'y \setminus U \big) = \varnothing.
	\end{equation*}
	
	The set $f^{-1}\mathcal{O}'y \setminus U$ is closed in $f^{-1}\mathcal{O}'y$. The mapping $f: X \to Y$ is $\sigma$-normal. By Theorem~\ref{thFuncCharSigmNorm} there exist a neighborhood $\mathcal{O}y \subset \mathcal{O}'y$ of $y$ and a $f$-equicontinuous at $y$ family of functions $\varphi_{l}: X \to [0,1]$, $l \in \mathbb{N}$, $\psi_{l}: X \to [0,1]$, $l \in \mathbb {N}$, such that
	\begin{itemize}
	\item[{\rm (a$^\prime$)}] $\mathrm{osc}_{\varphi_{l}}(f^{-1}\mathcal{O}y) < \frac{1} {2}$, $\mathrm{osc}_{\psi_{l}}(f^{-1}\mathcal{O}y) < \frac{1}{2}$, $l \in \mathbb {N}$;
		
	\item[{\rm (b$^\prime$)}] $U\cap f^{-1}\mathcal{O}y = \bigcup\limits_{l=1}^{\infty} T_{l }$, where the sets $T_{l}=T'_{l}\cap f^{-1}\mathcal{O}y$ are closed in $f^{-1}\mathcal{O}y$, and the sets $F_{l}\cap f^{-1}\mathcal{O}y\subset U\cap f^{-1}\mathcal{ O}y$, $l \in \mathbb{N}$, are closed in $f^{-1}\mathcal{O}y$;
		
	\item[{\rm (c$^\prime$)}] $f^{-1}\mathcal{O}y \setminus U \subset \varphi_{l}^{-1}(0)\cap f ^{-1}\mathcal{O}y$, $f^{-1}\mathcal{O}y \setminus U \subset \psi_{l}^{-1}(0)\cap f^{- 1}\mathcal{O}y$,
	 
	$F_{l}\cap f^{-1}\mathcal{O}y \subset \varphi_{l}^{-1}(1)\cap f^{-1}\mathcal{O}y$, $T_{l} \cap f^{-1}\mathcal{O}y \subset \psi_{l}^{-1}(1)\cap f^{-1}\mathcal{O}y$, $l\in\mathbb{N}$.
	  \end{itemize}
From the conditions (a$^\prime$) --- (c$^\prime$)  the conditions of the theorem follows.
 
\medskip
 
{\it Sufficiency.} Let us show that any open submapping of the mapping $f: X\to Y$ is of type $F_{\sigma}$. Without loss of generality, we will assume that $Y = \mathcal{O}$. The set $U$ is open in $X$.

According to conditions of the theorem, for any point $y\in Y$ there exist its neighborhood $\mathcal{O}y$ and a $f$-equicontinuous at $y$ family of functions $\psi_{l}: f^{-1 }\mathcal{O}y \to [0,1]$, $l\in \mathbb{N}$, such that:
\begin{equation*}
\begin{aligned}
\mathrm{osc}_{\psi_{l}} ( f^{-1}\mathcal{O}y) < \tfrac{1}{2}, \ f^{-1}\mathcal{O}y \setminus U \subset \psi_{l}^{-1}(0)\cap f^{-1}\mathcal{O}y, \ l \in \mathbb{N},\text{ and } \\
U \cap f^{-1}\mathcal{O}y=\bigcup\limits_{l \in \mathbb{N}} \psi^{-1}_{l}(1)\cap f^{- 1}\mathcal{O}y.
\end{aligned}
\end{equation*}
 
By Lemma~\ref{lemOscDisj} (for $a = 0$, $b = 1$) we have $\{ x \in f^{-1} \mathcal{O}y \; | \; \psi_{l}(x) = 0 \} \cap \mathrm{cl}_{f^{-1} \mathcal{O}y} \{ x \in f^{-1} \mathcal{O} y\; | \; \psi_{l}(x) = 1\} = \varnothing$. Let $T_{l} = \mathrm{cl}_{f^{-1} \mathcal{O}y} \{ x \in f^{-1} \mathcal{O}y \; | \; \psi_{l}(x) = 1\}$, $l\in\mathbb{N}$. Then, $U\cap f^{-1}\mathcal{O}y=\bigcup\limits_{l\in\mathbb N} T_{l}$ and the mapping $f$ is of type $F_{\sigma}$.
 
Let's show that the mapping $f: X\to Y$ is $\sigma$-normal. Without loss of generality, we will assume that $Y = \mathcal{O}$. Let $F = \bigcup\limits_{l \in \mathbb{N}} F_{l}$ be a $F_{\sigma}$-subset of $X$, $F_{l}$ are closed in $X$, $T$ is closed in $X$ and $T \cap F = \varnothing$. Then
$F \subset U= X \setminus T$.
 
From the conditions of the theorem it follows that for any point $y \in Y$ there exist its neighborhood $\mathcal{O}y$ and a $f$-equicontinuous at $y$ family of functions $\varphi_{l}: X \to [0 ,1]$, $l \in \mathbb{N}$, such that
\begin{equation*}
\begin{aligned}
\mathrm{osc}_{\varphi_{l}} ( f^{-1}\mathcal{O}y) < \tfrac{1}{2}, \ F_{l} \cap f^{-1} \mathcal{O}y \subset \varphi_{l}^{-1}(1) \cap f^{-1}\mathcal{O}y, \\
f^{-1}\mathcal{O}y \setminus U \subset \varphi_{l}^{-1}(0) \cap f^{-1}\mathcal{O}y, \ l \in \mathbb{N}.
\end{aligned}
\end{equation*}

By Theorem~\ref{thFuncCharSigmNorm} the mapping $f: X \to Y$ is $\sigma$-normal.
\end{proof}

\begin{corollary}
	For a $\sigma$-normal mapping $f: X\to Y$ the following conditions are equivalent.
	\begin{itemize}
	\item[{\rm (A)}] The mapping $f: X \to Y$ is $co$-$\sigma$-perfectly normal.
	\item[{\rm (B)}] For any open submapping $f|_{U}: U \to Y$ of $f$ and any point $y\in Y$ there are a neighborhood $\mathcal{O}y$ of $y$ and a $f$-equicontinuous at $y$ family of functions $\psi_{l}: X \to [0,1]$, $l \in \mathbb{N }$, such that
	\begin{itemize}
	\item[{\rm (1)}] $\mathrm{osc}_{\psi_{l}} ( f^{-1}\mathcal{O}y) < \frac{1}{2}$, $l \in \mathbb{N}$;
	\item[{\rm (2)}] $f^{-1} \mathcal{O}y \setminus U\subset \psi_{l}^{-1}(0)\cap f^{-1} \mathcal{O}y$, $l \in \mathbb{N}$, $U\cap f^{-1}\mathcal{O}y = \bigcup\limits_{l \in \mathbb{N}} \psi^{-1}_{l}(1)\cap f^{-1} \mathcal{O}y$.
	\end{itemize}
	\end{itemize}
\end{corollary}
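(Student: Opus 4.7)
The plan is to observe that this corollary is essentially an immediate extraction of part of the preceding Theorem~\ref{thFuncCharPerfSigmNormmap}, which already characterizes $co$-$\sigma$-perfect normality in terms of two $f$-equicontinuous families $\{\varphi_l\}$ and $\{\psi_l\}$. The corollary drops the family $\{\varphi_l\}$ (which witnesses $\sigma$-normality together with the separation of the $F_\sigma$-set $F$ from the complement of $U$) and keeps only $\{\psi_l\}$ (which witnesses that $U$ is of $F_\sigma$-type). Since (B) in the corollary already presumes $\sigma$-normality, only the $F_\sigma$-type part needs to be traced.

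For the implication \textbf{(A) $\Rightarrow$ (B)}: I would fix an open submapping $f|_U: U\to Y$, an arbitrary point $y\in Y$, and apply the necessity direction of Theorem~\ref{thFuncCharPerfSigmNormmap} to $\mathcal{O}=Y$, the set $U$, and any convenient $F_\sigma$-subset $F\subset U$ (for instance $F=\varnothing$, so the family $\{\varphi_l\}$ is trivial). The conclusion furnishes a neighborhood $\mathcal{O}y$ and the family $\{\psi_l\}$ satisfying exactly conditions (1) and (2) of the corollary.

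For the implication \textbf{(B) $\Rightarrow$ (A)}: the mapping $f$ is $\sigma$-normal by hypothesis, so it remains only to verify that every open submapping $f|_U: U\to Y$ is of $F_\sigma$-type. Given $y\in Y$, condition (B) provides $\mathcal{O}y$ and a $f$-equicontinuous family $\{\psi_l\}$ with $\mathrm{osc}_{\psi_l}(f^{-1}\mathcal{O}y)<\frac{1}{2}$. Applying Lemma~\ref{lemOscDisj} with $a=0$, $b=1$ (so that $b-a=1>\frac{1}{2}>\mathrm{osc}_{\psi_l}(f^{-1}\mathcal{O}y)$), the closure $T_l=\mathrm{cl}_{f^{-1}\mathcal{O}y}\{x\in f^{-1}\mathcal{O}y\,|\,\psi_l(x)=1\}$ is disjoint from $\{x\in f^{-1}\mathcal{O}y\,|\,\psi_l(x)=0\}\supset f^{-1}\mathcal{O}y\setminus U$. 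Hence $T_l\subset U\cap f^{-1}\mathcal{O}y$, and $T_l$ is closed in $f^{-1}\mathcal{O}y$. The inclusion $\psi_l^{-1}(1)\cap f^{-1}\mathcal{O}y\subset T_l\subset U\cap f^{-1}\mathcal{O}y$ together with condition (2) of (B) yields $U\cap f^{-1}\mathcal{O}y=\bigcup_{l\in\mathbb{N}}T_l$, exhibiting the open submapping $f|_U$ as being of $F_\sigma$-type at $y$. Combined with the assumed $\sigma$-normality, this is the definition of $co$-$\sigma$-perfect normality.

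There is essentially no obstacle, as all the work has been done in Theorem~\ref{thFuncCharPerfSigmNormmap} and Lemma~\ref{lemOscDisj}; the only mild care needed is to verify that the $T_l$ above are indeed closed in $f^{-1}\mathcal{O}y$ and covered in the union (the second follows from (2) of (B) and the monotonicity $\psi_l^{-1}(1)\subset T_l$).
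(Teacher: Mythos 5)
Your proposal is correct and follows essentially the same route as the paper: the corollary is stated there without a separate proof precisely because it is the restriction of Theorem~\ref{thFuncCharPerfSigmNormmap} to the family $\{\psi_l\}$ alone, and your (B) $\Rightarrow$ (A) argument via Lemma~\ref{lemOscDisj} with $T_l=\mathrm{cl}_{f^{-1}\mathcal{O}y}\,\psi_l^{-1}(1)$ reproduces exactly the $F_\sigma$-type step in the paper's sufficiency proof of that theorem. The choice $F=\varnothing$ in the necessity direction is a clean way to discard the unneeded family $\{\varphi_l\}$.
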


From Theorem~\ref{thFuncCharPerfSigmNormmap} and point (c) of Remark~\ref{remOnDefCommonPerfNorm} we have.

\begin{proposition}\label{prHerFuncPerfNormMapbb}
Any $co$-$\sigma$-perfectly normal mapping is perfectly normal. Any perfectly normal mapping is $co$-perfectly normal.
\end{proposition}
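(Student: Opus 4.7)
The plan is to read off both implications directly from Theorem~\ref{thFuncCharPerfSigmNormmap}, Theorem~\ref{thCommonPerfNormMapIsHerNorm}, and point (c) of Remark~\ref{remOnDefCommonPerfNorm}, which together do all the real work.

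For the implication ``$co$-$\sigma$-perfectly normal $\Rightarrow$ perfectly normal'', I would fix an open subset $O\subset X$ and a point $y\in Y$ and apply Theorem~\ref{thFuncCharPerfSigmNormmap} with the choices $\mathcal{O}=Y$, $U=O$, and the trivial $F_\sigma$-subset $F=\varnothing$ (all $F_l=\varnothing$). The theorem then produces a neighborhood $\mathcal{O}y$ and a $f$-equicontinuous at $y$ family $\{\psi_l:X\to[0,1]\}_{l\in\mathbb{N}}$ for which conditions (a) and (c) hold; these conditions rewrite verbatim to
\begin{equation*}
O\cap f^{-1}\mathcal{O}y=\bigcup\limits_{l\in\mathbb{N}}\psi_l^{-1}(1)\cap f^{-1}\mathcal{O}y,\quad f^{-1}\mathcal{O}y\setminus O\subset\psi_l^{-1}(0)\cap f^{-1}\mathcal{O}y,
\end{equation*}
so setting $\varphi_l:=\psi_l$ exhibits the family demanded by Definition~\ref{defCommonPerfNorm}.

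For the reverse implication ``perfectly normal $\Rightarrow$ $co$-perfectly normal'', Theorem~\ref{thCommonPerfNormMapIsHerNorm} already guarantees that $f$ is hereditarily normal, in particular normal, so the ``normal'' half of Definition~\ref{defSigmPerfNorm} is free. It then remains to see that every open submapping is of type $F_\sigma$, which is Remark~\ref{remOnDefCommonPerfNorm}(c). A brief justification is as follows: for an open $O\subset X$ and $y\in Y$, Definition~\ref{defCommonPerfNorm} supplies a neighborhood $\mathcal{O}y$ and a $f$-equicontinuous at $y$ family $\{\varphi_l\}$; by $f$-equicontinuity one can shrink $\mathcal{O}y$ so that $\mathrm{osc}_{\varphi_l}(f^{-1}\mathcal{O}y)<1$ uniformly in $l$. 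Lemma~\ref{lemOscDisj} with $a=0$, $b=1$ then forces $\mathrm{cl}_{f^{-1}\mathcal{O}y}(\varphi_l^{-1}(1)\cap f^{-1}\mathcal{O}y)\cap(\varphi_l^{-1}(0)\cap f^{-1}\mathcal{O}y)=\varnothing$, and by condition (2) of Definition~\ref{defCommonPerfNorm} each such closure sits inside $O\cap f^{-1}\mathcal{O}y$. Combined with condition (1), this yields $O\cap f^{-1}\mathcal{O}y=\bigcup_l\mathrm{cl}_{f^{-1}\mathcal{O}y}(\varphi_l^{-1}(1)\cap f^{-1}\mathcal{O}y)$, which is $F_\sigma$ in $f^{-1}\mathcal{O}y$.

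Neither direction conceals any genuine difficulty, since the substantive content has been absorbed into the two cited theorems, and what remains is essentially bookkeeping against the various definitions. The only point worth verbalizing is that Theorem~\ref{thFuncCharPerfSigmNormmap} legitimately applies with $F=\varnothing$: its conditions (a) and (b) concerning the $\varphi_l$-family become vacuous, leaving the $\psi_l$-family alone, which is precisely what Definition~\ref{defCommonPerfNorm} requires.
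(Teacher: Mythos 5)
Your argument is correct and follows essentially the same route as the paper, which derives the proposition directly from Theorem~\ref{thFuncCharPerfSigmNormmap} and Remark~\ref{remOnDefCommonPerfNorm}(c) (together with the already-established normality of perfectly normal mappings). You in fact supply more detail than the paper does — notably the verification of Remark~\ref{remOnDefCommonPerfNorm}(c) via Lemma~\ref{lemOscDisj} and the explicit specialization $F=\varnothing$ in Theorem~\ref{thFuncCharPerfSigmNormmap} — and these details check out.
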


\begin{remark}
In the case of a constant mapping, $co$-perfect normality, perfect normality and $co$-$\sigma$-perfect normality of the mapping coincide.
\end{remark}

\end{document}